\documentclass[12pt]{amsart}
\usepackage{geometry}
\geometry{a4paper,top=3.5cm,bottom=3.8cm,left=2.5cm,right=2.5cm}
\usepackage{hyperref}
\usepackage[all]{xy}
\usepackage{tikz}
\usetikzlibrary{positioning}

\newtheorem{thm}{Theorem}[section]
\newtheorem{lem}[thm]{Lemma}
\newtheorem{cor}[thm]{Corollary}
\newtheorem{prop}[thm]{Proposition}
\newtheorem*{thm*}{Theorem}

\theoremstyle{definition}

\newtheorem{example}[thm]{Example}

\newtheorem{que}[thm]{Question}

\theoremstyle{remark}

\numberwithin{equation}{section}

\newcommand{\bP}{\mathbb{P}}

\newcommand{\bQ}{\mathbb{Q}}

\newcommand{\cO}{\mathcal{O}}

\newcommand{\cI}{\mathcal{I}}

\begin{document}

\title{Smooth rational curves on singular rational surfaces}

\author{Ziquan Zhuang}

\address{Department of Mathematics, Princeton University, Princeton, NJ, 08544-1000.}

\email{zzhuang@math.princeton.edu}

\begin{abstract}
We classify all complex surfaces with quotient singularities that do not contain any smooth rational curves, under the assumption that the canonical divisor of the surface is not pseudo-effective. As a corollary we show that if $X$ is a log del Pezzo surface such that for every closed point $p\in X$, there is a smooth curve (locally analytically) passing through $p$, then $X$ contains at least one smooth rational curve.
\end{abstract}

\maketitle

\section{Introduction}

Let $X$ be a projective rationally connected variety defined over
$\mathbb{C}$. When $X$ is smooth, it is well known that there are
many smooth rational curves on $X$: if $\dim X=2$ then $X$ is
isomorphic to a blowup of either $\mathbb{P}^{2}$ or a ruled
surface $\mathbb{F}_{e}$; if $\dim X\ge3$, any two points on $X$
can be connected by a very free rational curve, i.e. image of $f:\mathbb{P}^{1}\rightarrow X$
such that $f^{*}T_{X}$ is ample, and a general deformation of $f$
is a smooth rational curve on $X$ (for the definition of rationally
connected variety and the above mentioned properties, see \cite{RationalCurve}).
It is then natural to ask about the existence of smooth rational curves
on $X$ when $X$ is singular. In this paper, we study this problem
on rational surfaces.

There are some possible obstructions to the existence of smooth rational
curves. It could happen that there is no smooth curve germ passing
through the singular points of $X$ (e.g. when $X$ has $E_{8}$ singularity)
while the smooth locus of $X$ contains no rational curves at all
(this could be the case when the smooth locus is of log Calabi-Yau
or log general type), and then we won't be able to find any smooth
rational curves on $X$. Hence to produce smooth rational curves on
$X$, we will need some control on the singularities of $X$ and the
``negativity'' of its smooth locus. We will show that these restrictions
are also sufficient, in particular, we will prove the following theorem,
which is one of the main results of this paper:

\begin{thm*}
Let $X$ be a surface with only quotient singularities. Assume that
\begin{enumerate}
\item $K_{X}$ is not pseudo-effective;
\item For every closed point $p\in X$, there is a smooth curve (locally
analytically) passing through $p$.
\end{enumerate}
Then $X$ contains at least one smooth rational curve.
\end{thm*}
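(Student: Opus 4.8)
The plan is to reduce to a Mori fiber space via the minimal model program, to dispose of the fibration and projective-plane cases at once, and to concentrate all the difficulty — together with the use of hypothesis (2) — in the case of a log del Pezzo surface of Picard rank one.

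Since a surface with quotient singularities is $\bQ$-factorial and klt and $K_X$ is not pseudo-effective, we may run a $K_X$-minimal model program terminating in a Mori fiber space: there is a finite sequence of divisorial contractions $\psi\colon X=X_0\to X_1\to\cdots\to X_m=Y$ and a morphism $g\colon Y\to Z$ with $\dim Z\le 1$ and $-K_Y$ relatively ample, where $Y$ again has only quotient singularities (klt surface singularities are quotient singularities). Let $\Sigma\subset Y$ be the finite set of images of the curves contracted by $\psi$, so that $\psi$ is an isomorphism over $Y\setminus\Sigma$. It then suffices to produce a smooth rational curve on $Y$ disjoint from $\Sigma$: its strict transform under $\psi$ is a smooth rational curve on $X$.

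If $\dim Z=1$, then for all but finitely many $z\in Z$ the fibre $g^{-1}(z)$ is a smooth rational curve contained in $Y^{\mathrm{sm}}\setminus\Sigma$, so we are done; and if $\dim Z=0$ with $Y\cong\bP^2$, a general line is disjoint from $\Sigma$. Hence everything comes down to the case in which $Y$ is a singular log del Pezzo surface with $\rho(Y)=1$.

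In that case I would argue as follows, and this is where I expect the real work to lie. If $Y$ carries a sufficiently large supply of smooth rational curves — say a covering family, or a base-point-free pencil — then co-finitely many members are disjoint from the finite set $\Sigma$ and we conclude as before. The issue is that there genuinely exist singular log del Pezzo surfaces of Picard rank one with no smooth rational curve at all — the du Val del Pezzo surface of degree one with an $E_8$ singularity is such an example, matching the obstruction flagged in the introduction, since an $E_8$ point admits no smooth curve germ. One must show that hypothesis (2), which through $\psi$ controls the singularities of $Y$ and, more importantly, controls those of $X$ itself, excludes all such pathologies. Concretely, I would pass to the minimal resolution $\widetilde Y\to Y$, a smooth rational surface, under which ``no smooth rational curve on $Y$ disjoint from $\Sigma$'' becomes the statement that every smooth rational curve on $\widetilde Y$ meets the configuration $\Gamma$ consisting of the $(\le -2)$-curves contracted by $\widetilde Y\to Y$ together with the proper transform of $\Sigma$; one then classifies the (few) possible $\Gamma$ and surfaces $Y$ for which this holds, and checks case by case that each forces some singular point of $X$ to admit no smooth curve germ, contradicting (2). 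This last step — essentially the classification of surfaces with quotient singularities and non-pseudo-effective canonical class containing no smooth rational curve, together with the bookkeeping of how the bad singularity of $Y$ is or is not improved along the tower $\psi$ — is the principal obstacle; the MMP reduction and the fibration and $\bP^2$ cases are routine by comparison, and I would expect the classification to leave only finitely many sporadic surfaces to inspect by hand.
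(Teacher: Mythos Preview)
Your overall architecture --- reduce to a rank-one log del Pezzo and then classify --- matches the paper's, and you correctly locate the hard part in that classification. But you are making the MMP reduction harder than it is, and this leads you to state a plan that is not quite right.

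The point you are missing is that \emph{the exceptional curve of any $K$-negative divisorial contraction on a klt surface is already a smooth rational curve}. Indeed, if $f\colon X_i\to X_{i+1}$ contracts an irreducible curve $C$, then $-K_{X_i}$ is $f$-ample, so relative Kawamata--Viehweg (or Sakai's vanishing) gives $R^1f_*\cO_{X_i}=0$; together with $R^2f_*\cI_C=0$ this forces $H^1(C,\cO_C)=0$, hence $C\cong\bP^1$. This is exactly the content of Lemma~\ref{lem:reduction} in the paper. Consequently, if your MMP $\psi$ is not an isomorphism, the very first step already exhibits a smooth rational curve on $X$ and you are done. So you may take $X=Y$ and $\Sigma=\emptyset$, and all the $\Sigma$-avoidance and the ``bookkeeping of how the bad singularity of $Y$ is or is not improved along $\psi$'' evaporates.

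Without this observation, your stated plan is slightly off. You propose to show that if $Y$ falls into the bad list then ``some singular point of $X$ admits no smooth curve germ.'' But that is not what happens when the $E_8$ point of $Y$ lies in $\Sigma$: the singularity of $X$ over that point can be perfectly mild, and what you get instead is a smooth rational curve on $X$ (the exceptional curve of $\psi$). Likewise, your remark that hypothesis~(2) ``through $\psi$ controls the singularities of $Y$'' is not correct at points of $\Sigma$, since divisorial contractions can worsen singularities there. Once you know $X=Y$, hypothesis~(2) applies to $Y$ directly and these issues disappear.

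The remaining work --- classifying rank-one log del Pezzo surfaces with quotient singularities containing no smooth $\bP^1$ and showing they all carry an $E_8$ point --- is exactly Proposition~\ref{prop:criterion for no P1} through Theorem~\ref{thm:Classification} in the paper, and is indeed where the substance lies.
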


In fact, we will prove something stronger. By studying various adjoint
linear systems on rational surfaces, we show that condition (1) above
combined with nonexistence of smooth rational curves has strong implication
on the divisor class group of $X$ (Proposition \ref{prop:criterion for no P1}),
which allows us to classify all surfaces with quotient singularities
that satisfy condition (1) above but do not contain smooth rational
curves (Theorem \ref{thm:Classification}). It turns out that all such
surfaces have an $E_{8}$ singularity, which is the only surface quotient
singularity that does not admit a smooth curve germ.

This paper is organized as follows. In section 2 we study the existence
of smooth rational curves on rational surfaces with quotient singularities
whose anticanonical divisor is pseudo-effective but not numerically
trivial and give the proof of the main result. In section 3 we study
some examples and propose a few questions. In particular we construct some rational surfaces with
quotient singularity and numerically trivial canonical divisor that
contain no smooth rational curves.

\smallskip

\paragraph*{\bf Conventions}

We work over the field $\mathbb{C}$ of complex numbers. Unless mentioned
otherwise, all varieties in this paper are assumed to be proper and all surfaces normal. 
A surface $X$ is called log del Pezzo if there is a $\mathbb{Q}$-divisor $D$
on $X$ such that $(X,D)$ is klt and $-(K_{X}+D)$ is ample.

\smallskip

\paragraph*{\bf Acknowledgement}
The author would like to thank his advisor J\'anos Koll\'ar for constant
support and lots of inspiring conversations. He also wishes to thank Qile
Chen, Ilya Karzhemanov, Brian Lehmann, Yuchen Liu, Chenyang Xu and Yi
Zhu for many helpful discussions. Finally he is grateful to the anonymous referee for careful reading of his manuscript and for the numerous constructive comments.

\section{Proof of main theorem}

In this section, we will classify all surfaces with quotient
singularities containing no smooth rational curves, under the assumption
that the anticanonical divisor is pseudo-effective but not numerically
trivial. As a corollary, we will see that if $X$ is a log del Pezzo
surface that has no $E_{8}$ singularity (as $E_{8}$ is the only
surface quotient singularity whose fundamental cycle contains no reduced
component, by \cite{SmoothArc} this is equivalent to saying that
for every point $p\in X$, there is a smooth curve germ passing through
$p$), then $X$ contains at least one smooth rational curve.

We start by introducing a few results on adjoint linear systems that we frequently use to identify smooth rational curves on a surface.

\begin{lem}
\label{lem:K_X+D}Let $X$ be a smooth rational surface and $D$ a
reduced divisor on $X$, then $|K_{X}+D|=\emptyset$ if and only if every connected
component of $D$ is a rational tree (i.e. every irreducible component
of $D$ is a smooth rational curve and the dual graph of $D$ is a
disjoint union of trees).
\end{lem}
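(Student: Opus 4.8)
The plan is to turn the vanishing of the adjoint system into a statement about the arithmetic genus of $D$, and then into combinatorics of its dual graph. The first step is adjunction: from the exact sequence
\[
0 \longrightarrow \cO_X(K_X) \longrightarrow \cO_X(K_X+D) \longrightarrow \omega_D \longrightarrow 0,
\]
together with $h^0(\cO_X(K_X)) = h^1(\cO_X(K_X)) = h^1(\cO_X) = 0$ (which hold because $X$ is rational, via Serre duality and birational invariance of $p_g$ and $q$), one gets $H^0(X,\cO_X(K_X+D)) \cong H^0(D,\omega_D) \cong H^1(D,\cO_D)^\vee$ by duality on the Cohen--Macaulay projective curve $D$. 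Writing $D = \bigsqcup_i D_i$ as the union of its connected components, each reduced, proper and connected so that $h^0(\cO_{D_i}) = 1$, we have $H^1(\cO_D) = \bigoplus_i H^1(\cO_{D_i})$ with $h^1(\cO_{D_i}) = p_a(D_i)$. Hence $|K_X+D| = \emptyset$ if and only if $p_a(D_i) = 0$ for every $i$, and it remains to show: a connected reduced curve $C$ on a smooth surface has $p_a(C) = 0$ if and only if it is a rational tree.

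For this I would induct on the number $n$ of irreducible components of $C$, using the additivity formula $p_a(C'\cup C'') = p_a(C') + p_a(C'') + C'\cdot C'' - 1$ for curves with no common component (immediate from applying $\chi$ to $0 \to \cO_{C'\cup C''} \to \cO_{C'}\oplus\cO_{C''} \to \cO_{C'\cap C''} \to 0$ and $\chi(\cO_{C'\cap C''}) = C'\cdot C''$). Order the components $C_1,\dots,C_n$ so that each partial union $D_i := C_1\cup\dots\cup C_i$ is connected (possible by a breadth-first traversal of the dual graph, which is connected since $C$ is). Then $D_{i-1}\cdot C_i \ge 1$ and
\[
p_a(D_i) - p_a(D_{i-1}) = p_a(C_i) + \bigl(D_{i-1}\cdot C_i - 1\bigr) \ge 0 .
\]
If $C$ is a rational tree one chooses the ordering so that $C_i$ attaches to $D_{i-1}$ along a single node, whence every term above vanishes and $p_a(C) = p_a(C_1) = 0$. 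Conversely, $p_a(C) = 0$ forces $p_a(C_i) = 0$ and $D_{i-1}\cdot C_i = 1$ for all $i$; since an irreducible curve of arithmetic genus $0$ is a smooth rational curve (from $p_a = g(\widetilde{C_i}) + \sum_p \delta_p$), and $D_{i-1}\cdot C_i = 1$ forces $C_i$ to meet the earlier components transversally at a single point — ruling out triple points and component self-intersections — every singular point of $C$ is a node; the dual graph is then a connected graph on $n$ vertices with exactly $n-1$ edges, i.e.\ a tree.

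I expect the reducible case to be the only real obstacle: the content is extracting from the single equality $p_a(C) = 0$ not merely rationality and smoothness of the components but also that the intersections are nodal and the configuration is simply connected, which is why the filtration $D_1 \subset \dots \subset D_n$ and the local interpretation of $D_{i-1}\cdot C_i = 1$ must be set up with some care. By contrast, the hypothesis on $X$ is used only in the first step: rationality is exactly what makes $H^0(\omega_X)$ and $H^1(\cO_X)$ vanish and so lets one replace $|K_X+D|$ by the intrinsic invariant $p_a(D)$.
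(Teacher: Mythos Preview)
Your proof is correct and follows the same route as the paper's: both use the adjunction sequence together with $h^0(\omega_X)=h^1(\omega_X)=0$ to reduce $|K_X+D|=\emptyset$ to $p_a=0$ on each connected component, and then unpack this combinatorially. The paper compresses the second step into the single identity $p_a(D)=\sum_i p_a(D_i)+e-v+1$ (with $e$ the total intersection number, so that $e-v+1\ge 0$ is the first Betti number of the dual multigraph), whereas you obtain the same conclusion by filtering $D$ one component at a time and telescoping the additivity formula---a slightly more explicit but equivalent argument.
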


\begin{proof}
We have an exact sequence $0\rightarrow\omega_{X}\rightarrow\omega_{X}(D)\rightarrow\omega_{D}\rightarrow0$
which induces a long exact sequence
\[
\cdots\rightarrow H^{0}(X,\omega_{X})\rightarrow H^{0}(X,\omega_{X}(D))\rightarrow H^{0}(D,\omega_{D})\rightarrow H^{1}(X,\omega_{X})\rightarrow\cdots
\]
Since $X$ is a smooth rational surface, $H^{0}(X,\omega_{X})=H^{1}(X,\omega_{X})=0$,
hence $H^{0}(X,\omega_{X}(D))=0$ if and only if $H^{0}(D,\omega_{D})=0$.
We now show that the latter condition holds if and only if every connected
component of $D$ is a rational tree. By doing this, we may assume $D$ is connected. Since $D$ is reduced, $H^{0}(D,\omega_{D})=0$ is
equivalent to $p_{a}(D)=0$. Let $D_{i}(i=1,\cdots,k)$ be the irreducible
components of $D$, we have $0=p_{a}(D)=\sum_{i=1}^{k}p_{a}(D_{i})+e-v+1$
where $e$, $v$ are the number of edges and vertices in the dual
graph of $D$. Since each $p_{a}(D_{i})\ge0$ and $e-v+1\ge0$, we
have equality everywhere, hence the lemma follows.
\end{proof}

We also need an analogous result when $X$ is not smooth.

\begin{lem}
Let $X$ be a projective normal Cohen-Macaulay variety of dimension
at least 2 and $D$ a Weil divisor on $X$, then we have an exact
sequence
\begin{equation}
0\rightarrow\omega_{X}\rightarrow\mathcal{O}_{X}(K_{X}+D)\rightarrow\omega_{D}\rightarrow0\label{eq:residue}
\end{equation}
where $\omega_{X}$, $\omega_{D}$ are the dualizing sheaf of $X$
and $D$, and $K_{X}$ is the canonical divisor of $X$.
\end{lem}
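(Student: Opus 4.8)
The plan is to construct the exact sequence $(\ref{eq:residue})$ as the adjunction/residue sequence, exactly as in the smooth case treated in Lemma~\ref{lem:K_X+D}, but being careful that everything makes sense when $X$ is only normal and Cohen--Macaulay. First I would observe that on a normal variety the sheaf $\cO_X(K_X+D)$ is reflexive, and since $D$ is a Weil divisor it is in general not locally free; however, the Cohen--Macaulay hypothesis is precisely what is needed to make the relevant $\mathrm{Ext}$-sheaves behave. The key point is that for a Cohen--Macaulay scheme $X$ of pure dimension $n$, the dualizing complex is a single sheaf $\omega_X$ placed in degree $-n$, and likewise a divisor $D$ on such an $X$ is itself Cohen--Macaulay of dimension $n-1$ (being a hypersurface locally, cut out by a single non-zerodivisor, since $X$ is normal hence $S_2$ and $D$ carries no embedded components), so $\omega_D$ is again an honest sheaf.

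The main step is to apply the duality functor to the structure sequence $0 \to \cO_X(-D) \to \cO_X \to \cO_D \to 0$. I would twist this sequence by $\cO_X(K_X+D)$ — which is legitimate since locally in codimension one $D$ is Cartier, and both $\cO_X(K_X+D)$ and $\cO_X(K_X)$ are reflexive, so tensoring and then reflexive-hull-ing gives $\cO_X(-D) \otimes \cO_X(K_X+D) \rightsquigarrow \cO_X(K_X) = \omega_X$ and $\cO_X \otimes \cO_X(K_X+D) = \cO_X(K_X+D)$ — to get
\[
0 \rightarrow \omega_X \rightarrow \cO_X(K_X+D) \rightarrow \cO_D(K_X+D) \rightarrow 0,
\]
where the twisting is exact on the left because $\cO_X(-D)$ injects into $\cO_X$ as an ideal sheaf of a divisor and one can check the sequence stays exact after tensoring with the reflexive sheaf, away from a codimension-two locus and then using $S_2$ to extend. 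Then I would identify the cokernel term $\cO_D(K_X+D)$ with $\omega_D$ via the adjunction formula for Cohen--Macaulay schemes: $\omega_D \cong \mathcal{E}xt^1_{\cO_X}(\cO_D, \omega_X) \cong \cO_D(K_X+D)$, the first isomorphism being the standard local-duality computation for the Cohen--Macaulay hypersurface $D \subset X$, and the second coming from the Koszul resolution $0 \to \cO_X(-D) \to \cO_X \to \cO_D \to 0$ in codimension one together with reflexivity to globalize.

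The hard part, and the only place the hypotheses are really used, is the bookkeeping between "genuine" line bundle computations (valid on the Cartier locus, i.e. away from a closed subset $Z \subset X$ of codimension $\ge 2$) and the statement for the reflexive/dualizing sheaves on all of $X$. The Cohen--Macaulay (in fact $S_2$) condition on $X$ and on $D$ guarantees that a section of any of these sheaves defined on $X \setminus Z$ extends uniquely across $Z$, so an isomorphism of sheaves established on $X \setminus Z$ propagates to an isomorphism on $X$; this is what upgrades the naive adjunction sequence on the smooth-in-codimension-one locus to the exact sequence $(\ref{eq:residue})$ globally. I would also need $\dim X \ge 2$ precisely so that this "remove codimension two, then extend" argument has room to work and so that $\omega_X$ is genuinely a reflexive sheaf of rank one rather than a more degenerate object. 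Once the codimension-two comparison is in place, exactness on the left ($\omega_X \hookrightarrow \cO_X(K_X+D)$ has no kernel) is automatic since both are torsion-free of the same rank, and exactness in the middle and on the right follows from the local Koszul/duality computation.
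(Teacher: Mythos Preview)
The paper does not actually prove this lemma: its entire proof is the citation ``See \cite[4.1]{KK13}''. Your proposal therefore supplies what the paper omits, and the argument you outline is essentially the standard one behind that reference, so in substance you are correct.

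One point of presentation is worth tightening. Framing the construction as ``twist the ideal-sheaf sequence by the reflexive sheaf $\cO_X(K_X+D)$ and then repair exactness using $S_2$-extension'' is awkward, because tensoring an exact sequence by a non--locally-free sheaf does not in general stay exact, and the cokernel term $\cO_D\otimes\cO_X(K_X+D)$ lives on $D$, where ``take reflexive hull on $X$'' is not the right operation. The cleaner and more robust phrasing---which you in fact invoke in your identification step---is to apply $\mathcal{H}\mathrm{om}_{\cO_X}(-,\omega_X)$ directly to $0\to\cO_X(-D)\to\cO_X\to\cO_D\to 0$ and read off the long exact sequence of $\mathcal{E}\mathrm{xt}$'s. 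Then $\mathcal{H}\mathrm{om}(\cO_D,\omega_X)=0$ since $\omega_X$ is torsion-free, $\mathcal{H}\mathrm{om}(\cO_X,\omega_X)=\omega_X$, $\mathcal{H}\mathrm{om}(\cO_X(-D),\omega_X)\cong\cO_X(K_X+D)$ as both are reflexive and agree in codimension one, $\mathcal{E}\mathrm{xt}^1(\cO_D,\omega_X)\cong\omega_D$ by duality for the Cohen--Macaulay closed subscheme $D$ of codimension one, and $\mathcal{E}\mathrm{xt}^1(\cO_X,\omega_X)=0$. This yields \eqref{eq:residue} without any ad hoc extension argument, and makes transparent exactly where the Cohen--Macaulay hypotheses on $X$ and $D$ enter.
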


\begin{proof}
See \cite[4.1]{KK13}
\end{proof}

\begin{cor}
\label{cor:when a curve is P1}Let $X$ be a rational surface with
only rational singularities, $D$ an integral curve on $X$, then
$D$ is a smooth rational curve if and only if $|K_{X}+D|=\emptyset$.
\end{cor}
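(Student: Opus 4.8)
The plan is to deduce this from the preceding exact sequence \eqref{eq:residue} together with vanishing results coming from the rationality of the singularities. Since $X$ is a rational surface with only rational singularities, $X$ is Cohen--Macaulay, so \eqref{eq:residue} applies: we get $0\to\omega_X\to\cO_X(K_X+D)\to\omega_D\to 0$. Taking the long exact sequence in cohomology yields
\[
H^0(X,\omega_X)\to H^0\bigl(X,\cO_X(K_X+D)\bigr)\to H^0(D,\omega_D)\to H^1(X,\omega_X).
\]
By Serre duality and the fact that $X$ is a rational surface, $H^0(X,\omega_X)=H^2(X,\cO_X)^\vee=0$; and $H^1(X,\omega_X)=H^1(X,\cO_X)^\vee=0$ as well, since rational singularities together with rationality of $X$ force $H^1(X,\cO_X)=0$ (alternatively, pull back to a resolution and use that the higher direct images of $\cO$ vanish). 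Hence $H^0(X,\cO_X(K_X+D))\cong H^0(D,\omega_D)$, so $|K_X+D|=\emptyset$ if and only if $H^0(D,\omega_D)=0$.

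It then remains to show that for an integral curve $D$ on such a surface, $H^0(D,\omega_D)=0$ is equivalent to $D$ being a smooth rational curve. Since $D$ is integral, $H^0(D,\omega_D)=0$ is equivalent to the arithmetic genus $p_a(D)=0$: indeed $h^0(D,\omega_D)=h^1(D,\cO_D)=p_a(D)$ by duality on the curve $D$. The normalization map $\nu\colon\widetilde D\to D$ gives $p_a(D)=p_a(\widetilde D)+\delta$ where $\delta=\dim_\bC\nu_*\cO_{\widetilde D}/\cO_D\ge 0$ measures the singularities of $D$, and $p_a(\widetilde D)=g(\widetilde D)\ge 0$. So $p_a(D)=0$ forces $g(\widetilde D)=0$ and $\delta=0$, i.e. $D$ is smooth and rational; the converse is clear since then $D\cong\bP^1$ has $p_a=0$.

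The main subtlety is the vanishing $H^1(X,\cO_X)=0$ on a rational surface with rational singularities — this is where "rational" in both senses is used, and one should be slightly careful to invoke it correctly (via a resolution $\pi\colon X'\to X$ with $R^i\pi_*\cO_{X'}=0$ for $i>0$ and $H^1(X',\cO_{X'})=0$, the latter because $X'$ is a smooth rational surface). Everything else is the formal cohomology chase above and the standard genus formula for an integral curve; no serious calculation is needed.
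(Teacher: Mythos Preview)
Your proof is correct and follows essentially the same route as the paper: apply the exact sequence \eqref{eq:residue}, use the vanishing of $H^0(X,\omega_X)$ and $H^1(X,\omega_X)$ for a rational surface with rational singularities, and reduce to $H^0(D,\omega_D)=0$. You have simply spelled out more carefully the two steps the paper leaves implicit, namely the Serre duality justification for the vanishings and the normalization argument that $p_a(D)=0$ forces $D$ to be smooth rational.
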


\begin{proof}
Since $X$ is a normal surface, it is CM, so we can apply the previous
lemma to get the exact sequence \eqref{eq:residue}, which induces
the long exact sequence
\[
H^{0}(X,\omega_{X})\rightarrow H^{0}(X,\mathcal{O}_{X}(K_{X}+D))\rightarrow H^{0}(D,\omega_{D})\rightarrow H^{1}(X,\omega_{X})\rightarrow\cdots
\]
As $X$ is a rational surface with only rational singularities, we
have $H^{0}(X,\omega_{X})=H^{1}(X,\omega_{X})=0$, hence $D$ is a
smooth rational curve iff $H^{0}(D,\omega_{D})=0$ iff $|K_{X}+D|=\emptyset$.
\end{proof}

One may notice that the above lemmas only apply to rational surfaces while our main theorem is stated for arbitrary surfaces. This is only a minor issue, as illustrated by the following lemma.

\begin{lem} \label{lem:reduction}
Let $X$ be a surface. Assume that $X$ does not contain any smooth rational curves. Then either $K_X$ is nef or $-K_X$ is numerically ample and $\rho(X)=1$. 
\end{lem}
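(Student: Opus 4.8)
The plan is to run the minimal model program on $X$ and extract enough information from the absence of smooth rational curves to force one of the two stated conclusions. First I would reduce to the case where $K_X$ is not nef; then, since $X$ is a (normal, projective) surface, some $K_X$-negative extremal contraction $\pi\colon X\to Y$ exists. The contraction $\pi$ is of one of three types: a divisorial contraction contracting a single curve $E$ to a point, a Mori fiber space over a curve $B$, or a Mori fiber space over a point (so $Y$ is a point, $\rho(X)=1$ and $-K_X$ is ample). The strategy is to rule out the first two types by producing a smooth rational curve in each, which leaves only the third — giving exactly "$-K_X$ numerically ample and $\rho(X)=1$."

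For the divisorial case, the exceptional curve $E$ has $E^2<0$ and $K_X\cdot E<0$; I would argue that $E\cong\mathbb{P}^1$ and that $E$ is in fact smooth as a curve on $X$ (the issue is only whether $E$ passes through singular points of $X$ in a non-smooth way, but an extremal $K_X$-negative curve with $E^2<0$ on a surface is a smooth rational curve). So this case already contradicts the hypothesis. For the Mori fiber space over a curve $B$, the general fiber $F$ satisfies $K_X\cdot F<0$ and $F^2=0$; again $F$ is a smooth rational curve for general choice (it avoids the finitely many singular points lying over special points of $B$, or rather meets the smooth locus generically), contradicting the hypothesis once more. Hence only the third type of contraction can occur, and in that case $\rho(X)=1$ with $-K_X$ ample; pinning down that "$-K_X$ is numerically ample" then follows because on a surface of Picard number one, an ample divisor is numerically proportional to $-K_X$ (which is $\pi$-ample with $Y$ a point), and numerical ampleness is what the statement asserts.

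The main obstacle I anticipate is the bookkeeping around singularities: a priori $X$ is an arbitrary normal surface, not necessarily $\mathbb{Q}$-factorial or with good singularities, so one must be careful that the cone theorem and contraction theorem apply (they do for normal projective surfaces, where $K_X$-negative extremal rays can be contracted, possibly after passing to a $\mathbb{Q}$-factorialization, or one works directly using the structure of the effective cone of a surface). The delicate point is verifying that the contracted curve $E$ (or the general fiber $F$) is genuinely a \emph{smooth} rational curve on $X$ — i.e. smooth as a scheme, including at any singular point of $X$ it might pass through — rather than merely having normalization $\mathbb{P}^1$. For $E$ with $E^2<0$ this is standard surface geometry; for the fiber $F$ one uses that a general fiber of a morphism to a curve is disjoint from the (finite) singular locus issues, so generic smoothness of the fibration gives a smooth $\mathbb{P}^1$. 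Once these verifications are in place, the trichotomy of extremal contractions closes the argument.
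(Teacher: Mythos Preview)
Your overall strategy matches the paper's: assume $K_X$ is not nef, run the MMP (for arbitrary normal surfaces, as in Sakai's work), and rule out the divisorial and fiber-over-a-curve contractions by exhibiting a smooth rational curve. So the architecture is correct.

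The gap is exactly where you yourself flag it as ``delicate'': showing that the curve $C$ contracted by a $K_X$-negative birational step is a \emph{smooth} rational curve. Your claim that ``an extremal $K_X$-negative curve with $E^2<0$ on a surface is a smooth rational curve'' by ``standard surface geometry'' is not justified in this generality. On a smooth surface one would use adjunction ($2p_a(E)-2=E^2+K_X\cdot E<0$), but here $X$ is an arbitrary normal surface: $K_X$ need not be $\mathbb{Q}$-Cartier, and even when it is, adjunction acquires a nonnegative correction term (the different) that could a priori spoil the inequality. The paper supplies the missing argument cohomologically: since $-K_X$ is $f$-ample for the contraction $f\colon X\to Y$, Sakai's vanishing gives $R^1f_*\mathcal{O}_X=0$; and $R^2f_*\mathcal{I}_C=0$ by the theorem on formal functions (fibers have dimension $\le 1$). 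The long exact sequence for $0\to\mathcal{I}_C\to\mathcal{O}_X\to\mathcal{O}_C\to 0$ then yields $H^1(C,\mathcal{O}_C)=R^1f_*\mathcal{O}_C=0$, so $C\cong\mathbb{P}^1$. This is the substantive content you need to insert; the fiber-over-a-curve case is handled as you say, by taking a general fiber.
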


Here since $-K_X$ is in general not $\bQ$-Cartier, its nefness or numerical ampleness is understood in the sense of \cite{sakaiams}. In particular, if we further assume $X$ has rational singularities (which implies $X$ is $\bQ$-factorial) and $K_X$ is not pseudo-effective (as we do in our main theorem), then $-K_X$ is ample and $X$ is a rational surface of Picard number one by \cite[Lemma 3.1]{numdP}.

\begin{proof}
First suppose $X$ is not relatively minimal. By \cite[Theorem 1.4]{sakaiams}, we may run the $K_X$-MMP on $X$. Let $f:X\rightarrow Y$ be the first step in the MMP. Since $-K_X$ is $f$-ample, by \cite[Theorem 6.3]{sakaiduke} we have $R^1f_*\cO_X=0$. Let $C\subseteq X$ be an irreducible curve contracted by $f$ and $\cI_C$ its ideal sheaf. Since the fibers of $f$ has dimension $\le 1$ we have $R^2f_*\cI_C=0$ by the theorem of formal functions. It then follows from the long exact sequence associated to $0\rightarrow\cI_C\rightarrow\cO_X\rightarrow\cO_C\rightarrow0$ that $H^1(C,\cO_C)=R^1f_*\cO_C=0$, hence $C$ is a smooth rational curve on $X$, contrary to our assumption.

We may therefore assume that $X$ is relatively minimal. If $K_X$ is not nef then by \cite[Theorem 3.2]{sakaiams}, either $-K_X$ is numerically ample and $\rho(X)=1$ or $X$ admits a fibration $g:X\rightarrow B$ whose general fiber is $\bP^1$. However, the latter case cannot occur since $X$ does not contain smooth rational curves. This proves the lemma. 
\end{proof}

Now we come to a useful criterion for whether a surface
contains at least one smooth rational curve.

\begin{prop}
\label{prop:criterion for no P1}Let $X$ be a surface with
only rational singularities. Assume $K_{X}$ is not pseudo-effective, then the following are equivalent:
\begin{enumerate}
\item $X$ does not contain any smooth rational curves;
\item The class group $\mathrm{Cl}(X)$ is infinite cyclic and is generated
by some effective divisor $D$ linearly equivalent to $-K_{X}$.
\end{enumerate}
\end{prop}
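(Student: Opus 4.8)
The plan is to prove both implications using the corollary that for a rational surface with rational singularities, an integral curve $D$ is a smooth rational curve if and only if $|K_X + D| = \emptyset$, together with Lemma \ref{lem:reduction}, which tells us that under our hypotheses $-K_X$ is ample, $\rho(X) = 1$, and $X$ is rational; in particular $X$ is $\bQ$-factorial with rational singularities, so $\mathrm{Cl}(X)$ is a finitely generated abelian group of rank one.

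For the direction $(2) \Rightarrow (1)$: suppose $\mathrm{Cl}(X) = \bZ \cdot D$ with $D \sim -K_X$ effective. Any integral curve $C$ on $X$ satisfies $C \sim mD$ for some positive integer $m$ (positive because $-K_X$ is ample, so $D$ generates the ``positive ray''). Then $K_X + C \sim (m-1)D$. If $m \geq 2$ this is a nonzero effective class, so $|K_X + C| \neq \emptyset$ and $C$ is not a smooth rational curve by Corollary \ref{cor:when a curve is P1}. If $m = 1$ then $C \sim D \sim -K_X$, so $C \in |-K_X|$; I would need to rule this out — but if such an integral $C$ existed it would be a smooth rational curve, and then $C$ and $D$ are two effective divisors in the same class, which is fine, so actually I must argue more carefully that no member of $|-K_X|$ is an integral curve. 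The key point: since $\mathrm{Cl}(X)$ is generated by $D$ and $D \sim -K_X$ is already in the class, the only way $|-K_X|$ contains an integral curve is if that curve equals (is numerically/rationally the generator) — but then $h^0(-K_X) \geq 1$ and one checks via the residue sequence $0 \to \omega_X \to \cO_X(K_X + C) \to \omega_C \to 0$ that $H^0(C, \omega_C) \cong H^0(X, \cO_X)  \ne 0$ forces $C$ rational — wait, that would make $C$ a smooth rational curve, contradicting $(1)$; so in fact this case cannot arise precisely because we are trying to prove $(1)$. Cleaner: assume for contradiction $(2)$ holds but $C$ is a smooth rational curve; then $C \sim mD$ and $|K_X + C| = \emptyset$, i.e.\ $|(m-1)D| = \emptyset$, forcing $m = 1$, so $C \sim D \sim -K_X$; then $K_X + C \sim 0$ and indeed $h^0(\cO_X) = 1 \ne 0$, contradicting $|K_X+C| = \emptyset$. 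So $(2) \Rightarrow (1)$.

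For the direction $(1) \Rightarrow (2)$: assume $X$ has no smooth rational curve. By Lemma \ref{lem:reduction}, $-K_X$ is ample, $\rho(X)=1$, and $X$ is rational with rational (hence $\bQ$-factorial) singularities, so $\mathrm{Cl}(X)$ has rank one and torsion subgroup $T$. I want to show $T = 0$ and that the generator of $\mathrm{Cl}(X)/T = \mathrm{Cl}(X)$ is represented by an effective $D \sim -K_X$. The main idea is to use an adjoint/anticanonical linear system argument: pick the generator $A$ of the positive ray of $\mathrm{Cl}(X)$ modulo torsion (the ample generator), write $-K_X \sim rA + (\text{torsion})$ for some positive rational — actually integer — $r$, and use that a general curve in a suitable multiple of $A$, or the residue sequence, controls things. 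Concretely, I would take a sufficiently positive $D' \in |mA|$ which is an integral curve (Bertini, using $mA$ very ample for $m \gg 0$ and $\rho = 1$ so linear systems are ``as irreducible as possible''), and compute $|K_X + D'| = |(-r + m)A + \tau|$ for appropriate torsion $\tau$; since $D'$ is not a smooth rational curve, Corollary \ref{cor:when a curve is P1} gives $|K_X + D'| \ne \emptyset$, i.e.\ $(m - r)A + \tau$ is effective. Pushing $m$ down to its minimal value where an integral curve still exists should pin down $r = 1$ and $\tau = 0$ and force the generator to be anticanonical; ruling out torsion will come from the same mechanism applied to torsion twists $A + t$, $t \in T$, of the generator — if $A + t$ had an integral representative it would be a smooth rational curve, so $|K_X + (A+t)| \ne \emptyset$ would have to fail, giving a contradiction unless the class structure is exactly $\bZ \cdot (-K_X)$.

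The hard part will be the direction $(1) \Rightarrow (2)$, specifically handling the torsion in $\mathrm{Cl}(X)$ and proving the generator can be chosen linearly (not just numerically) equivalent to $-K_X$: this requires carefully controlling which twisted linear systems $|mA + t|$ contain integral curves versus being empty or having only reducible/non-reduced members, and threading that through Corollary \ref{cor:when a curve is P1}. I expect one needs the full strength of ``$\rho(X) = 1$'' (so that effectiveness is detected numerically up to torsion) plus a Riemann–Roch / vanishing estimate on $X$ to guarantee that for $m$ just large enough $|mA + t|$ has an integral member, and then a minimality argument to descend to the anticanonical class itself.
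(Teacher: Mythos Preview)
Your argument for $(2)\Rightarrow(1)$ is essentially the paper's, once you strip away the false starts: if $C\sim kD$ with $k\ge 1$, then $K_X+C\sim(k-1)D$ is effective (including $k=1$, where it is the zero divisor), contradicting $|K_X+C|=\emptyset$. One genuine issue: you invoke Lemma~\ref{lem:reduction} in your setup, but that lemma \emph{assumes} condition~(1), so it is circular for this direction. You still need $X$ rational to apply Corollary~\ref{cor:when a curve is P1}; the paper gets this separately from \cite[Lemma~3.1]{numdP} (morally: $\mathrm{Cl}(X)=\bZ\cdot[-K_X]$ forces $\rho(X)=1$ and $-K_X$ ample, hence $X$ is a rational del Pezzo).

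For $(1)\Rightarrow(2)$ you have the right ingredient (Corollary~\ref{cor:when a curve is P1} gives $K_X+C$ effective whenever $C$ is integral and not smooth rational) but you are making the descent much harder than necessary by working class-by-class in $\mathrm{Cl}(X)=\bZ\oplus T$ and worrying about which twisted systems $|mA+t|$ contain integral curves. The paper sidesteps all of this by minimizing over \emph{actual effective divisors} rather than classes: pick an ample $H$ and choose an effective $C$ that is \emph{not} an integral multiple of $-K_X$ in $\mathrm{Cl}(X)$ with $(H\cdot C)$ minimal. Minimality forces $C$ to be integral; by~(1) it is not a smooth $\bP^1$, so $K_X+C$ is effective with $(H\cdot(K_X+C))<(H\cdot C)$ (since $-K_X$ is ample), and $K_X+C$ is still not a multiple of $-K_X$ --- contradicting minimality. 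Hence every effective divisor lies in $\bZ\cdot[-K_X]$, and since effective divisors generate $\mathrm{Cl}(X)$, you get $\mathrm{Cl}(X)\cong\bZ$ with no separate torsion analysis and no Bertini or Riemann--Roch input. A second, similar minimality argument (smallest $m$ with $-mK_X$ effective) then shows $m=1$, giving the effective $D\sim -K_X$. Your proposed descent would eventually collapse to this, but the framing in terms of the abstract generator $A$ and torsion twists creates exactly the bookkeeping difficulties you flag as ``the hard part''; working with divisors of minimal $H$-degree dissolves them.
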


\begin{proof}
First assume (2) holds. By \cite[Lemma 3.1]{numdP}, $X$ is necessarily a rational surface. If $X$ contains a smooth rational curve $C$,
then by Corollary \ref{cor:when a curve is P1}, $|K_{X}+C|=\emptyset$,
but by (2), we may write $C\sim kD$ for some integer $k\ge1$, and
$K_{X}+C\sim(k-1)D$ is effective, a contradiction, so (1) follows.

Now assume (1) holds. By Lemma \ref{lem:reduction} and its subsequent remark, $X$ is a rational surface with ample anti-canonical divisor. Let $H$ be an ample divisor on $X$ and assume
there exists some effective divisor $C$ on $X$ that is not an integral
multiple of $-K_{X}$ in $\mathrm{Cl}(X)$. Among such divisors we
may choose $C$ so that $(H.C)$ is minimal. Clearly $C$ is integral,
and by (1) it is not a smooth rational curve, hence by Corollary \ref{cor:when a curve is P1},
$K_{X}+C$ is effective. Since $-K_{X}$ is ample, we have
$(K_{X}+C.H)<(C.H)$, so by our choice of $C$, $K_{X}+C$ is an integral
multiple of $K_{X}$, hence so is $C$, a contradiction. It follows
that every effective divisor on $X$ is linearly equivalent to a multiple
of $-K_{X}$. Since $\mathrm{Cl}(X)$ is generated by the class of
effective divisors, we see that it is infinite cyclic and generated
by $-K_{X}$. Now let $m$ be the smallest positive integer such that
$-mK_{X}$ is effective. Write $-mK_{X}\sim\sum a_{i}D_{i}$ where
$a_{i}>0$ and $D_{i}$ is integral. As $m$ is minimal and each $D_{i}$
is also a multiple of $-K_{X}$, we have indeed $-mK_{X}\sim D$ an
integral curve. $D$ is not smooth rational by (1), hence again by
Corollary \ref{cor:when a curve is P1}, $K_{X}+D$ is effective,
but $K_{X}+D\sim-(m-1)K_{X}$, so by the minimality of $m$ we have
$m=1$, and thus all the assertions in (2) are proved.
\end{proof}

From now on, $X$ will always be a normal surface that satisfies
the assumptions and the equivalent conditions (1)(2) of Proposition
\ref{prop:criterion for no P1}. In particular, $X$ is rational, $\mathbb{Q}$-factorial and has Picard number one, $-K_{X}$
is ample and $\mathrm{Pic}(X)\cong\mathbb{Z}$ is generated by $-rK_{X}$
where $r$ is smallest positive integer such that $rK_{X}$ is Cartier
(i.e. the index of $X$). We further assume that $X$ has at worst
quotient singularities (or equivalently, klt singularities, as we
are in the surface case). Let $X^{0}$ be the smooth locus of $X$,
$\pi:Y\rightarrow X$ the minimal resolution and $E\subset Y$ the
reduced exceptional locus.

\begin{lem}
Notation as above. Then we have an exact sequence
\[
0\rightarrow\mathrm{Cl}(X)/\mathrm{Pic}(X)\rightarrow H^{2}(E,\mathbb{Z})/H_{2}(E,\mathbb{Z})\rightarrow H_{1}(X^{0},\mathbb{Z})\rightarrow0
\]
and an isomorphism $H^{2}(E,\mathbb{Z})/H_{2}(E,\mathbb{Z})\cong\mathbb{Z}/r\mathbb{Z}$. 
\end{lem}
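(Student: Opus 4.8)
The plan is to pull everything back to the minimal resolution $\pi\colon Y\to X$ and reduce the statement to linear algebra of lattices. Set $L:=\mathrm{Pic}(Y)$; since $Y$ is a smooth projective rational surface, the exponential sequence (using $H^1(\cO_Y)=H^2(\cO_Y)=0$) together with $H_1(Y,\bZ)=0$ identifies $L$ with $H^2(Y,\bZ)$, a \emph{unimodular} lattice under the intersection form. Let $N\subseteq L$ be the sublattice spanned by the components $E_1,\dots,E_n$ of $E$. By Mumford's theorem $N$ is negative definite, so the intersection form on $N$ is nondegenerate; hence $N\cap N^\perp=0$ in $L$, and the natural map $N\to N^\ast:=\mathrm{Hom}(N,\bZ)$, $x\mapsto(y\mapsto x\cdot y)$, is injective with finite cokernel. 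Since $X$ has quotient singularities, $E$ is a disjoint union of trees of $\bP^1$'s, so $H_1(E,\bZ)=0$, $H_2(E,\bZ)=\bigoplus_i\bZ[E_i]=N$, and by universal coefficients $H^2(E,\bZ)\cong N^\ast$; under these identifications the natural map $H_2(E,\bZ)\to H^2(E,\bZ)$ (coming from $H_2(E)\to H_2(Y)\cong H^2(Y)\to H^2(E)$) is exactly the map $N\to N^\ast$ above, so $H^2(E,\bZ)/H_2(E,\bZ)\cong N^\ast/N$.

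On the algebraic side, since $\pi$ restricts to an isomorphism over the smooth locus $X^0$ and $X\setminus X^0$ has codimension $2$, the excision sequence for class groups yields $\mathrm{Cl}(X)\cong\mathrm{Pic}(X^0)=\mathrm{Pic}(Y\setminus E)\cong L/N$ (the $[E_i]$ being independent in $L$). I then claim $\pi^\ast$ identifies $\mathrm{Pic}(X)$ with $N^\perp=\{D\in L:\ D\cdot E_i=0\text{ for all }i\}$: the inclusion $\subseteq$ is the projection formula, and for $\supseteq$ one uses that $X$ has rational singularities — if $D\cdot E_i=0$ for all $i$, then $\cO_Y(D)$ is analytically trivial near each singular point (exponential sequence there, with $H^1(\cO)=H^2(\cO)=0$), whence $\pi_\ast\cO_Y(D)$ is invertible and $D=\pi^\ast M$ for some Cartier divisor $M$ (the difference $D-\pi^\ast M$ is supported on $E$ and orthogonal to every $E_i$, hence zero). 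Therefore $\mathrm{Cl}(X)/\mathrm{Pic}(X)\cong L/(N+N^\perp)$, and the map $L\to N^\ast$, $D\mapsto(D\cdot E_i)_i$, whose kernel is $N^\perp$, induces an \emph{injection} $\mathrm{Cl}(X)/\mathrm{Pic}(X)\hookrightarrow N^\ast/N=H^2(E,\bZ)/H_2(E,\bZ)$.

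To identify the cokernel of this injection with $H_1(X^0,\bZ)$ I turn to topology: in the long exact homology sequence of the pair $(Y,X^0)$, Lefschetz duality gives $H_k(Y,Y\setminus E)\cong H^{4-k}(E,\bZ)$, so using $H_1(Y,\bZ)=H_3(Y,\bZ)=0$ and $H^1(E,\bZ)=0$ it reduces to
\[
0\to H_2(X^0,\bZ)\to H_2(Y,\bZ)\xrightarrow{\ \beta\ }H^2(E,\bZ)\to H_1(X^0,\bZ)\to 0,
\]
and the standard compatibility of Lefschetz duality with Poincaré duality on $Y$ identifies $\beta$ with the map $L\to N^\ast$ of the previous paragraph. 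Splicing, I obtain the asserted exact sequence $0\to\mathrm{Cl}(X)/\mathrm{Pic}(X)\to H^2(E,\bZ)/H_2(E,\bZ)\to H_1(X^0,\bZ)\to 0$. Finally, using unimodularity of $L$, $\mathrm{coker}\,\beta$ equals the cokernel of the restriction $L^\ast\to N^\ast$ dual to $N\hookrightarrow L$, which is $\Ext^1_{\bZ}(L/N,\bZ)=\Ext^1_{\bZ}(\mathrm{Cl}(X),\bZ)$; this vanishes because $\mathrm{Cl}(X)$ is infinite cyclic by Proposition~\ref{prop:criterion for no P1}, hence torsion-free. Thus $H_1(X^0,\bZ)=0$, the first arrow is an isomorphism, and since $\mathrm{Cl}(X)=\bZ[-K_X]$ while $\mathrm{Pic}(X)=\bZ[-rK_X]=r\bZ[-K_X]$, we conclude $H^2(E,\bZ)/H_2(E,\bZ)\cong\mathrm{Cl}(X)/\mathrm{Pic}(X)\cong\bZ/r\bZ$.

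I expect the main obstacle to lie in the two identifications that make this diagram chase legitimate: (i) that a line bundle on $Y$ with degree $0$ on every $E_i$ descends to a Cartier divisor on $X$ — which genuinely uses rationality of the singularities — and (ii) that the connecting homomorphism $\beta$ in the topological sequence is precisely the intersection-pairing map $L\to N^\ast$. Once these are established, the remainder is formal.
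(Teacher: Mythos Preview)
Your proof is correct. The paper outsources the exact sequence to \cite[Lemma~2]{Zhang}, whereas you reprove it from scratch via the lattice picture $N\subset L$ and the long exact homology sequence of the pair $(Y,X^0)$ together with Alexander--Lefschetz duality; this is essentially the content of the cited lemma, so in that respect the approaches coincide. The one genuine difference is in the vanishing of $H_1(X^0,\bZ)$: the paper argues that a nonzero $H_1(X^0,\bZ)$ would produce a nontrivial cyclic \'etale cover of $X^0$, hence torsion in $\mathrm{Pic}(X^0)\cong\mathrm{Cl}(X)$, contradicting Proposition~\ref{prop:criterion for no P1}; you instead identify $H_1(X^0,\bZ)$ with $\mathrm{coker}(L^\ast\to N^\ast)=\Ext^1_\bZ(L/N,\bZ)=\Ext^1_\bZ(\mathrm{Cl}(X),\bZ)$ using unimodularity of $L$, and conclude directly from torsion-freeness of $\mathrm{Cl}(X)$. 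Both arguments are ultimately powered by the same input (no torsion in $\mathrm{Cl}(X)$), but yours stays entirely within the lattice framework and avoids the detour through covering spaces, at the cost of needing the identification of $\beta$ with the intersection-pairing map --- which, as you note, is the standard compatibility of Poincar\'e and Lefschetz duality.
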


Here we identify $H_{2}(E,\mathbb{Z})$ as a subgroup of $H^{2}(E,\mathbb{Z})$
by the composition $H_{2}(E,\mathbb{Z})\rightarrow H_{2}(Y,\mathbb{Z})\rightarrow H^{2}(Y,\mathbb{Z})\rightarrow H^{2}(E,\mathbb{Z})$
where the first and the last map are induced by the inclusion $E\subset Y$
and the second by Poincar\'e duality. In other words, the intersection
pairing on $Y$ induces a nondegenerate pairing $H_{2}(E,\mathbb{Z})\times H_{2}(E,\mathbb{Z})\rightarrow\mathbb{Z}$,
hence we may view $H_{2}(E,\mathbb{Z})$ as a subgroup of $H^{2}(E,\mathbb{Z})$.
Note that the intersection numbers between irreducible components
of $E$ only depend on the singularities of $X$, so the quotient
$H^{2}(E,\mathbb{Z})/H_{2}(E,\mathbb{Z})$ should be considered as
a local invariant of the singularities of $X$.

\begin{proof}
The existence of the exact sequence follows from \cite[Lemma 2]{Zhang}.
If $\mathrm{Cl}(X)\cong\mathbb{Z}\cdot[-K_{X}]$, then from what we
just said $\mathrm{Pic}(X)\cong\mathbb{Z}\cdot[-rK_{X}]$ hence $\mathrm{Cl}(X)/\mathrm{Pic}(X)\cong\mathbb{Z}/r\mathbb{Z}.$
It remains to prove $H_{1}(X^{0},\mathbb{Z})=0$. Since the intersection
matrix of $E$ is nondegenerate, $H_{1}(X^{0},\mathbb{Z})$ is finite.
If it is not zero, $X^{0}$ will admit a nontrivial \'etale cyclic
covering of degree $d>1$, hence $\mathrm{Pic}(X^{0})\cong\mathrm{Cl}(X)$
would contain $d$-torsion, a contradiction.
\end{proof}

If $p\in X$ is a singular point, we let $r_{p}$ be the local index
of $p$, i.e., the smallest positive integer $m$ such that $mK_{X}$
is Cartier at $p$, and define $\mathrm{Cl}_{p}=H^{2}(E_{p},\mathbb{Z})/H_{2}(E_{p},\mathbb{Z})$
in the same way as in the above lemma with $E_{p}=\pi^{-1}(p)_{\mathrm{red}}$.
As explained in the next lemma, it can be viewed as the ``local class
group'' of $X$ at $p$. Since $(X,p)$ has quotient singularities,
locally (in the analytic topology) it is isomorphic to a neighbourhood in $\mathbb{C}^{2}/G$ of the image of the origin where $G$ is a finite subgroup of $GL(2,\mathbb{C})$,
then $r_{p}=|H|$ where $H$ is the image of $G$ under the determinant
map $\det:G\subset GL(2,\mathbb{C})\rightarrow\mathbb{C}^{*}$, and
$\mathrm{Cl}_{p}$ is isomorphic to the abelianization of $G$:

\begin{lem}
In the above notations, $\mathrm{Cl}_{p}\cong G/G'$.
\end{lem}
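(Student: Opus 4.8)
The statement to prove is that for a quotient singularity $p\in X$ analytically isomorphic to $\bC^2/G$ with $G\subset GL(2,\bC)$ finite, the local class group $\mathrm{Cl}_p = H^2(E_p,\bZ)/H_2(E_p,\bZ)$ is isomorphic to the abelianization $G/G'$. My plan is to identify both sides with the same concrete object: the divisor class group of a small analytic (or algebraic étale-local) neighbourhood $U$ of $p$, or equivalently the local Picard group of the punctured neighbourhood $U\setminus\{p\}$.

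First I would recall, exactly parallel to the global lemma just proved, that for the germ $(X,p)$ with minimal resolution $\pi_p\colon V\to U$ and exceptional fibre $E_p$, there is an exact sequence of the same shape as in \cite[Lemma 2]{Zhang} (or one derives it directly from the exponential sequence / the long exact sequence of the pair $(V, V\setminus E_p)$ together with excision), giving $\mathrm{Cl}(U)/\mathrm{Pic}(U)\hookrightarrow H^2(E_p,\bZ)/H_2(E_p,\bZ)\to H_1(U\setminus\{p\},\bZ)\to 0$. Since $U$ is a small neighbourhood of a single point, it is contractible, so $\mathrm{Pic}(U)=0$ and $\mathrm{Cl}(U)$ is precisely the local class group of $X$ at $p$; thus $\mathrm{Cl}(U)\hookrightarrow \mathrm{Cl}_p$. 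On the other hand, $U\setminus\{p\}$ is homotopy equivalent to the link of the singularity, whose fundamental group is $G$ (this is the classical description of quotient singularities: $\pi_1$ of the link of $\bC^2/G$ is $G$ acting freely on $S^3$), so $H_1(U\setminus\{p\},\bZ)=G^{\mathrm{ab}}=G/G'$.

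Next I would pin down $\mathrm{Cl}(U)$ directly. Writing $q\colon \bC^2\to \bC^2/G=U$ for the quotient map, Weil divisor classes on $U$ correspond to $G$-linearized reflexive sheaves, equivalently to characters of $G$ once we account for pseudo-reflections; more simply, since $\bC^2$ is factorial, $\mathrm{Cl}(U)\cong H^1_{\mathrm{loc}}$ is computed by the standard fact $\mathrm{Cl}(\bC^n/G)\cong G^{\mathrm{ab}}$ when $G$ contains no pseudo-reflections, and in general the small part of $G$ drops out but does not change the abelianization's relevant quotient in the surface case because a finite subgroup of $GL(2,\bC)$ acting on $\bC^2$ with the class-group computed via $\pi_1$ of the smooth locus still yields $G/G'$ after the standard reduction (one may invoke \cite{SmoothArc} or Brieskorn's classification, or just the Armstrong-type computation of $\pi_1$ of the link). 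Combining: $\mathrm{Cl}(U)=H_1(U\setminus\{p\},\bZ)=G/G'$, which forces the first map in the exact sequence to be an isomorphism and identifies $\mathrm{Cl}_p\cong G/G'$.

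The main obstacle I anticipate is not any single hard computation but rather bookkeeping: making sure the exact sequence of the global lemma genuinely localizes (the vanishing of $R^1\pi_{p,*}\cO_V$ for rational singularities is what collapses the sequence correctly, and one must check that $H_2(E_p,\bZ)$ sits inside $H^2(E_p,\bZ)$ nondegenerately, which holds because the intersection matrix of the exceptional curves of a resolution is negative definite), and correctly handling the case $G$ contains pseudo-reflections so that $\bC^2/G$ is still the germ in question — here the cleanest route is to replace $G$ by its image in the quotient presentation with small subgroup divided out, noting this operation does not affect $G/G'$ in a way that matters because the determinant and the class group are insensitive to it in dimension two, or simply to cite the established identification of $\pi_1$ of the link with $G$ for the standard normalized presentation. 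Once the two identifications $\mathrm{Cl}_p = \mathrm{Cl}(U)$ and $\mathrm{Cl}(U)=G/G'$ are in hand, the isomorphism is immediate.
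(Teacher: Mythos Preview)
Your overall strategy of identifying $\mathrm{Cl}_p$ with the local divisor class group $\mathrm{Cl}(U)$ (equivalently $\mathrm{Pic}(U\setminus\{p\})$) and then computing the latter as $G/G'$ is exactly what the paper does. However, the way you try to establish the first identification has a genuine gap.

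You write down a localized version of the Zhang exact sequence
\[
0\longrightarrow \mathrm{Cl}(U)\longrightarrow \mathrm{Cl}_p \longrightarrow H_1(U\setminus\{p\},\bZ)\longrightarrow 0,
\]
observe that both outer terms are isomorphic to $G/G'$, and then conclude that ``the first map is an isomorphism''. This is a non sequitur: an exact sequence with kernel and cokernel both isomorphic to $G/G'$ forces $|\mathrm{Cl}_p|=|G/G'|^2$, not $|G/G'|$. So either your conclusion is wrong or the sequence you wrote does not actually hold in the local setting. In fact it is the latter: the derivation of the Zhang sequence uses Poincar\'e duality on the \emph{compact} surface $Y$, and this breaks down when you pass to the germ.

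The paper sidesteps this entirely. After localizing so that $X=\bC^2/G$ is affine, it uses that the minimal resolution $Y$ deformation retracts onto $E_p$ and that $H^i(Y,\cO_Y)=0$ (rational singularities) to get $\mathrm{Pic}(Y)\cong H^2(Y,\bZ)\cong H^2(E_p,\bZ)$ directly. Comparing the exact sequence $\oplus\bZ[E_{p,i}]\to\mathrm{Pic}(Y)\to\mathrm{Pic}(U\setminus\{p\})\to 0$ with $H_2(E_p,\bZ)\to H^2(E_p,\bZ)\to\mathrm{Cl}_p\to 0$ then gives $\mathrm{Cl}_p\cong\mathrm{Pic}(U\setminus\{p\})$ immediately, with no third term. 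The computation $\mathrm{Pic}(U\setminus\{p\})\cong H^1(G,\bC^*)\cong G/G'$ via $G$-equivariant line bundles on $\bC^2\setminus 0$ is then clean and avoids your digression about pseudo-reflections. To fix your argument, drop the Zhang sequence and instead justify $\mathrm{Cl}_p=\mathrm{Cl}(U)$ directly from the retraction $Y\simeq E_p$.
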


\begin{proof}
By definition, $\mathrm{Cl}_{p}$ only depends on the intersection
matrix of $E_{p}$, hence we may replace $X$ by an \'etale neighbourhood
of $p$, in particular we may assume $(X,p)\cong(\mathbb{C}^{2}/G,0)$.
As before $\pi:Y\rightarrow X$ is the minimal resolution, then $E_{p}$
is a deformation retract of $Y$. As $X$ is affine and has rational
singularities, $H^{i}(Y,\mathcal{O}_{Y})=H^{i}(X,\mathcal{O}_{X})=0$
for all $i>0$, so by the long exact sequence associated to the exponential
sequence $0\rightarrow\mathbb{Z}\rightarrow\mathcal{O}_{Y}\rightarrow\mathcal{O}_{Y}^{*}\rightarrow0$
we have $\mathrm{Pic}(Y)\cong H^{2}(Y,\mathbb{Z})\cong H^{2}(E_{p},\mathbb{Z})$
and hence the following commutative diagram (where $U=X\backslash p=Y\backslash E_{p}$
and $E_{p,i}$ are the irreducible components of $E_{p}$):
\[
\xymatrix{\oplus\mathbb{Z}[E_{p,i}]\ar[r]\ar[d]^{\cong} & \mathrm{Pic}(Y)\ar[r]\ar[d]^{\cong} & \mathrm{Pic}(U)\ar[r]\ar[d] & 0\\
H_{2}(E_{p},\mathbb{Z})\ar[r] & H^{2}(E_{p},\mathbb{Z})\ar[r] & \mathrm{Pic}(U)\ar[r] & 0
}
\]
It follows that $\mathrm{Cl}_{p}\cong\mathrm{Pic}(U)$. Let $V=\mathbb{C}^{2}\backslash0$,
then $\mathrm{Pic}(V)=0$ and giving a line bundle on $U$ is equivalent
to giving a $G$-action on the trivial line bundle on $V$ that is
compatible with the $G$-action on $V$. Such objects are classified
by $H^{1}(G,\mathcal{O}_{V}^{*})=H^{1}(G,\mathbb{C}^{*})\cong G/G'$,
so the lemma follows.
\end{proof}

In particular, $r_{p}\le|\mathrm{Cl}_{p}|$. Since $r$ is the lowest
common multiple of all $r_{p}$ and
\[H^{2}(E,\mathbb{Z})/H_{2}(E,\mathbb{Z})\cong\mathbb{Z}/r\mathbb{Z}\]
is the direct sum of all $\mathrm{Cl}_{p}$, we obtain

\begin{cor}
$\mathrm{Cl}_{p}\cong\mathbb{Z}/r_{p}\mathbb{Z}$ for all $p\in\mathrm{Sing}(X)$.
\end{cor}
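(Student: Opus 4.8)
The plan is to combine the two facts displayed just before the statement: that $H^2(E,\mathbb{Z})/H_2(E,\mathbb{Z})\cong\mathbb{Z}/r\mathbb{Z}$ is cyclic and decomposes as the direct sum $\bigoplus_p\mathrm{Cl}_p$ over the singular points $p$, together with the relation $r_p\le|\mathrm{Cl}_p|$ and the fact that $r$ is the least common multiple of the $r_p$. The strategy is purely to extract elementary group theory from these inputs.

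First I would upgrade the inequality $r_p\le|\mathrm{Cl}_p|$ to a divisibility $r_p\mid|\mathrm{Cl}_p|$. By the previous lemma $\mathrm{Cl}_p\cong G/G'$, and $r_p=|H|$ where $H=\det(G)$; since $\det$ is a homomorphism into the abelian group $\mathbb{C}^*$, it factors through the abelianization $G/G'$, so $H$ is a quotient of $\mathrm{Cl}_p$ and hence $r_p=|H|$ divides $|\mathrm{Cl}_p|$.

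Next, since the finite abelian group $\bigoplus_p\mathrm{Cl}_p$ is cyclic, each summand $\mathrm{Cl}_p$ is cyclic and the orders $|\mathrm{Cl}_p|$ are pairwise coprime; in particular $r=|\bigoplus_p\mathrm{Cl}_p|=\prod_p|\mathrm{Cl}_p|$. Because $r_p\mid|\mathrm{Cl}_p|$ and the $|\mathrm{Cl}_p|$ are pairwise coprime, the integers $r_p$ are pairwise coprime as well, so $r=\mathrm{lcm}_p(r_p)=\prod_p r_p$. Comparing the two product expressions for $r$ and using $r_p\le|\mathrm{Cl}_p|$ term by term forces $r_p=|\mathrm{Cl}_p|$ for every singular point $p$; since $\mathrm{Cl}_p$ is cyclic, this yields $\mathrm{Cl}_p\cong\mathbb{Z}/r_p\mathbb{Z}$.

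There is no real obstacle here: the whole content is the short deduction above, and the essential global input — that $\mathrm{Cl}(X)\cong\mathbb{Z}$, which makes $\bigoplus_p\mathrm{Cl}_p$ cyclic — has already been established. The only mildly delicate points are the upgrade from $r_p\le|\mathrm{Cl}_p|$ to $r_p\mid|\mathrm{Cl}_p|$, and the standard observation that a finite direct sum of finite abelian groups is cyclic precisely when the summands are cyclic with pairwise coprime orders.
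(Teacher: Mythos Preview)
Your argument is correct and is exactly the elementary group-theoretic deduction the paper leaves implicit (the paper simply writes ``we obtain'' after recording $r_p\le|\mathrm{Cl}_p|$, $r=\mathrm{lcm}_p r_p$, and $\bigoplus_p\mathrm{Cl}_p\cong\mathbb{Z}/r\mathbb{Z}$). Note, though, that the divisibility upgrade $r_p\mid|\mathrm{Cl}_p|$ is unnecessary: the chain $r=\mathrm{lcm}_p r_p\le\prod_p r_p\le\prod_p|\mathrm{Cl}_p|=r$ already forces $r_p=|\mathrm{Cl}_p|$ using only the inequality.
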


Quotient surface singularities are classified in \cite[Satz 2.11]{Brieskorn},
using the table there together with the well known classification
of Du Val singularities (see for example \cite{DuVal}) we see that
each singularity of $X$ has to be one of the following: the cyclic
singularity $\frac{1}{n}(1,q)$ where $(q,n)=(q+1,n)=1$, type $\left\langle b;2,1;3,1;3,2\right\rangle $(recall
from \cite[Satz 2.11]{Brieskorn} that a type $\left\langle b;n_{1},q_{1};n_{2},q_{2};n_{3},q_{3}\right\rangle $
singularity is the one whose dual graph is a fork such that the central
vertex represents a curve with self intersection number $-b$ and
the three branches are dual graph of the cyclic singularity $\frac{1}{n_{i}}(1,q_{i})$
$(i=1,2,3)$), or $\left\langle b;2;3;5\right\rangle $ (meaning it
is of type $\left\langle b;2,r;3,s;5,t\right\rangle $ for some $r$,
$s$, $t$) . In particular, $E_{8}$ is the only Du Val singularity
that appears in the list.

We now turn to the classification of surfaces without smooth rational
curves. 

\begin{lem}
\label{lem:only one klt}$X$ has at most one non Du Val singular
point.
\end{lem}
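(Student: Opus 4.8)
The plan is to argue by contradiction: suppose $X$ has two distinct non--Du Val singular points $p$ and $q$. The first step is the class group computation. By the Corollary just above, $\mathrm{Cl}_{p'}\cong\mathbb{Z}/r_{p'}\mathbb{Z}$ for every $p'\in\mathrm{Sing}(X)$, and by the earlier lemma together with the remark following it, $H^{2}(E,\mathbb{Z})/H_{2}(E,\mathbb{Z})\cong\mathbb{Z}/r\mathbb{Z}$ is isomorphic to the direct sum $\bigoplus_{p'}\mathrm{Cl}_{p'}$, where $r=\mathrm{lcm}_{p'}\,r_{p'}$ is the index of $X$. A finite direct sum $\bigoplus_{p'}\mathbb{Z}/r_{p'}\mathbb{Z}$ is cyclic if and only if the orders $r_{p'}$ are pairwise coprime; hence the local indices at the singular points of $X$ are pairwise coprime. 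Since a quotient surface singularity is Du Val exactly when its local index equals $1$, this shows in particular that $r_{p}$ and $r_{q}$ are coprime integers that are both $\ge 2$.

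Next I would record how the canonical generator of $\mathrm{Cl}(X)$ sits relative to $p$ and $q$. By Proposition \ref{prop:criterion for no P1}, $\mathrm{Cl}(X)$ is infinite cyclic and generated by an effective integral curve $D\sim-K_{X}$; from the residue sequence \eqref{eq:residue} and $K_{X}+D\sim0$ one gets $\omega_{D}\cong\mathcal{O}_{D}$, so $D$ has arithmetic genus $1$. If $D$ avoided the non--Du Val point $p$, then $D$ would be Cartier at $p$, hence (as $X$ is smooth away from its singularities) Cartier, i.e.\ $D\in\mathrm{Pic}(X)=\mathbb{Z}\cdot[-rK_{X}]$; writing $D\sim-rkK_{X}$ and combining with $D\sim-K_{X}$ and the torsion--freeness of $\mathrm{Cl}(X)$ forces $r=1$, contradicting $r_{p}\ge 2$. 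So $D$ passes through every non--Du Val point, and its local class there generates $\mathrm{Cl}_{p'}$, since the image of $[-K_{X}]$ in $\mathrm{Cl}_{p'}\cong\mathbb{Z}/r_{p'}\mathbb{Z}$ has order $r_{p'}$ (as $-kK_{X}$ is Cartier at $p'$ iff $r_{p'}\mid k$). In particular $D$ is a single integral curve of arithmetic genus $1$ meeting both $p$ and $q$.

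The remaining, and main, step is to rule this configuration out using the explicit list of admissible singularity types recalled above. For a cyclic type $\frac{1}{n}(1,q')$ with $(q',n)=(q'+1,n)=1$, and for the fork types $\langle b;2,1;3,1;3,2\rangle$ and $\langle b;2;3;5\rangle$, one reads off the local index together with the shape of the dual graph (the number $n_{p'}$ of exceptional curves and their self--intersections) from the discrepancy equations. Coprimality of $r_{p}$ and $r_{q}$ then leaves only finitely many possibilities, and I would eliminate each by passing to the minimal resolution $\pi\colon Y\to X$: this is a smooth rational surface of Picard number $1+\sum_{p'}n_{p'}$ on which $\pi^{*}(-K_{X})=-K_{Y}-\sum b_{p',i}E_{p',i}$ is nef and big with null locus exactly $E$, and on which $K_{X}^{2}=K_{Y}^{2}+\sum_{p'}\delta_{p'}=\bigl(9-\sum_{p'}n_{p'}\bigr)+\sum_{p'}\delta_{p'}>0$ with $\delta_{p'}=-\bigl(\sum_{i}b_{p',i}E_{p',i}\bigr)^{2}$; one then checks that the two prescribed disjoint exceptional configurations, together with the strict transform $\widetilde{D}$ (an effective curve meeting the exceptional locus over each of $p$ and $q$), cannot coexist on such a surface — typically because they force too many mutually disjoint negative curves on a rational surface of the allowed Picard number. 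I expect this last bookkeeping — enumerating the finitely many coprime index pairs and excluding each on $Y$ — to be the real obstacle; everything before it is formal.
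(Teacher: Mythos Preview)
Your setup through the second paragraph is correct and useful: the local indices at distinct singular points are pairwise coprime, and the integral curve $D\sim-K_X$ (with $p_a(D)=1$) must pass through every non--Du Val point. The problem is the final step.

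The assertion that ``coprimality of $r_p$ and $r_q$ then leaves only finitely many possibilities'' is false. Among the admissible cyclic types $\frac{1}{n}(1,q')$ with $(q',n)=(q'+1,n)=1$ there are infinitely many --- for instance $\frac{1}{n}(1,1)$ for every odd $n\ge 3$, each with local index exactly $n$ --- and hence infinitely many coprime pairs (take any two distinct odd primes). So there is no finite case list to run through. Moreover, the proposed elimination mechanism (``too many mutually disjoint negative curves on a rational surface of the allowed Picard number'') cannot work as stated: the Picard number of $Y$ is $1+\sum_{p'} n_{p'}$, so the exceptional curves together with one extra class always generate $\mathrm{Pic}(Y)$ exactly, and nothing is overcrowded. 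The bookkeeping you defer to the end is thus not a residual computation but the whole difficulty, and the outline gives no indication of how to carry it out for an infinite family.

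The paper avoids classification entirely. On the minimal resolution write $K_Y+\tilde D+\sum a_iE_i=\pi^*(K_X+D)\sim 0$; since $K_X+D$ is Cartier the $a_i$ are integers, and the negativity lemma gives $a_i\ge 1$ over every singular point lying on $D$. If $D$ met two such points $p_1,p_2$, set $\Delta_j=\sum_{\pi(E_i)=p_j}a_iE_i>0$. Intersecting the relation with $\tilde D$ yields $2p_a(\tilde D)-2+(\tilde D\cdot(\Delta_1+\Delta_2))\le 0$, forcing $\tilde D\cong\mathbb{P}^1$ and $(\tilde D\cdot\Delta_j)=1$. But $K_Y+\tilde D+\Delta_j\equiv_\pi 0$ over $p_j$, so by \cite[Proposition~5.58]{Kollar-Mori} the pair $(Y,\tilde D+\Delta_j)$ is log canonical and its dual graph is a cycle, which is impossible when $\tilde D$ meets $\Delta_j$ in a single transverse point. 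This adjunction argument is uniform in the singularity type; no enumeration of cases is needed, and that is the idea your proposal is missing.
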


\begin{proof}
Since $X$ satisfies (2) of Proposition \ref{prop:criterion for no P1},
there is an effective divisor $D\in|-K_{X}|$ (which is necessarily
an integral curve). Let $\tilde{D}$ be its strict transform on $Y$,
we may write
\begin{equation}
K_{Y}+\tilde{D}+\sum a_{i}E_{i}=\pi^{*}(K_{X}+D)\sim0\label{eq:discrep}
\end{equation}
where the $E_{i}$'s are the irreducible components of $E$ and $a_{i}\in\mathbb{Z}$
(as $K_{X}+D$ is Cartier on $X$). Since $Y$ is the minimal resolution,
$K_{Y}+\tilde{D}$ is $\pi$-nef, thus by the negativity lemma \cite[Lemma 3.39]{Kollar-Mori},
all $a_{i}\ge0$ and we have $a_{i}\ge1$ if $D$ passes $p=\pi(E_{i})$
or $X$ is not Du Val at $p$. In the latter case, as $K_{X}$ is
not Cartier at $p$, $D$ must pass through $p$.

We claim that $D$ contains at most one singular point of $X$, hence
at most one singular point of $X$ is not Du Val. Suppose this is
not the case, and $p_{1}$, $p_{2}\in D\cap\mathrm{Sing}(X)$, let
$\Delta_{j}=\sum_{\pi(E_{i})=p_{j}}a_{i}E_{i}$ ($j=1,2$), then we
have $\Delta_{j}>0$ and $(\tilde{D}.\Delta_{j})\ge1$. On the other
hand by \eqref{eq:discrep} we have $2p_{a}(\tilde{D})-2+(\Delta_{1}+\Delta_{2}.\tilde{D})=(K_{Y}+\tilde{D}+\Delta_{1}+\Delta_{2}.\tilde{D})\le0$,
hence $p_{a}(\tilde{D})=0$, $\tilde{D}\cong\mathbb{P}^{1}$, and
$(\tilde{D}.\Delta_{j})=1\,(j=1,2)$. As $K_{Y}+\tilde{D}+\Delta_{j}\equiv_{\pi}0$
over $p_{j}$, we can apply \cite[Proposition 5.58]{Kollar-Mori}
to see that $(Y,\tilde{D}+\Delta_{j})$ is lc (hence every curve in
$\Delta_{j}$ appear with coefficient one) and the dual graph of $\tilde{D}+\Delta_{j}$
is a loop, this contradicts the fact that $(\tilde{D}.\Delta_{j})=1$.
\end{proof}

If $X$ is Gorenstein, then by the previous discussion it has only
$E_{8}$-singularities, hence by the classification of Gorenstein log
del Pezzo surfaces, $X$ is one of the two types of $S(E_{8})$ as
discussed in \cite[Lemma 3.6]{Keel-McKernan} and it is straightforward
to verify that neither of them contain smooth rational curves (e.g.
using Proposition \ref{prop:criterion for no P1}). So from now on
we assume $X$ is not Gorenstein, and by the above lemma, we may denote
by $p$ the unique non Du Val singular point of $X$ and let $\Delta=\pi^{-1}(p)_{\mathrm{red}}$
. We also get the following immediate corollary from the proof of
Lemma \ref{lem:only one klt}:

\begin{cor}
\label{cor:D away from Sing(X)}Notation as above, then every effective
divisor $D\sim-K_{X}$ passes through $p$ and no other
singular points of $X$.
\end{cor}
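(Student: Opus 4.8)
The plan is to read off the corollary from the proof of Lemma~\ref{lem:only one klt}, which already contains both assertions as intermediate steps; the only thing I really need to check is that those steps apply to an \emph{arbitrary} effective divisor $D\sim -K_X$, not merely to the particular one produced by Proposition~\ref{prop:criterion for no P1}.

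First I would record that every effective $D\sim -K_X$ is an integral curve. Indeed, by condition (2) of Proposition~\ref{prop:criterion for no P1} the group $\mathrm{Cl}(X)$ is infinite cyclic generated by $[-K_X]$, and $-K_X$ is ample; writing $D=\sum n_iC_i$ with the $C_i$ distinct prime divisors and $n_i\ge 1$, each class $[C_i]$ is a strictly positive multiple $m_i[-K_X]$ of the ample generator, so $1=\sum n_im_i$ leaves a single summand with $n_1=m_1=1$.

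Next, the paragraph in the proof of Lemma~\ref{lem:only one klt} beginning ``We claim that $D$ contains at most one singular point of $X$'' uses only that $D$ is an integral member of $|-K_X|$, the identity \eqref{eq:discrep}, the negativity lemma, and \cite[Proposition~5.58]{Kollar-Mori}; by the previous step it therefore applies to our $D$, so $D$ meets at most one point of $\mathrm{Sing}(X)$. It remains to see that this point must be $p$. Since $p$ is not a Du Val singularity and $X$ has only quotient singularities, $p$ is not Gorenstein, i.e.\ $K_X$ is not Cartier at $p$. If $p\notin\Supp D$, then from $K_X\sim -D$ the reflexive sheaf $\cO_X(K_X)\cong\cO_X(-D)$ is trivial in a neighbourhood of $p$, so $K_X$ would be Cartier there, a contradiction. (This is exactly the remark already made inside the proof of Lemma~\ref{lem:only one klt}; alternatively, had $D$ missed $p$, then near $\Delta$ the identity \eqref{eq:discrep} would read $K_Y=\pi^*K_X-\sum_{\pi(E_i)=p}a_iE_i$ with all $a_i\in\bZ_{\ge 0}$ by the negativity lemma, forcing every discrepancy over $p$ to be a nonpositive integer, hence zero by klt-ness, so $p$ would be canonical and Gorenstein, i.e.\ Du Val.) Combining the two halves, every effective $D\sim -K_X$ passes through $p$ and through no other singular point of $X$.

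I do not anticipate a genuine obstacle: the corollary is really a repackaging of facts established in the proof of Lemma~\ref{lem:only one klt}. The only point needing a little care, as indicated above, is to make sure that the ``at most one singular point'' argument is invoked for a general $D\in|-K_X|$, which is what the integrality observation in the first step secures.
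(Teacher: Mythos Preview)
Your proposal is correct and follows exactly the paper's approach: the paper states this as an ``immediate corollary from the proof of Lemma~\ref{lem:only one klt}'', and you have simply unpacked that, noting explicitly that the integrality of any $D\in|-K_X|$ and the Cartier-index argument forcing $p\in D$ were already present there and apply to an arbitrary such $D$.
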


In some cases, the curve $D$ constructed in the previous proof turns
out to be already a smooth rational curve on $X$. To be precise:

\begin{prop}
\label{prop:3 types of Sing(X)}Let $D\in|-K_{X}|$ and $\tilde{D}$
its strict transform on $Y$. Then either $X$ has a cyclic singularity
at $p$ and $K_{Y}+\tilde{D}+\Delta\sim0$, or $(X,p)$ is a singular
point of type $\left\langle b;2,1;3,1;3,2\right\rangle $ or $\left\langle b;2;3;5\right\rangle $
with $b=2$.
\end{prop}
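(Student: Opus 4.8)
The plan is to exploit the relation \eqref{eq:discrep}, which by Corollary~\ref{cor:D away from Sing(X)} takes the form
\[
K_Y+\tilde{D}+A\sim 0,\qquad A:=\sum_{\pi(E_i)=p}a_iE_i,\quad a_i\in\bZ_{\ge 1},
\]
where $A$ is the divisor written $\sum a_iE_i$ in the proof of Lemma~\ref{lem:only one klt}. First I would record two preliminary facts. Applying adjunction along the non-exceptional curve $\tilde{D}$ gives $(A.\tilde{D})=2-2p_a(\tilde{D})$, an even number; since $D$ passes through $p$, the strict transform $\tilde{D}$ meets the connected divisor $\Delta$, so $(A.\tilde{D})\ge(\Delta.\tilde{D})\ge 1$, forcing $(A.\tilde{D})=2$ and $\tilde{D}\cong\bP^1$. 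Secondly, since $K_X+D\sim 0$ is Cartier, the residue sequence \eqref{eq:residue} shows $\omega_D\cong\cO_D$, hence $\chi(\omega_D)=0$ and $p_a(D)=1$; as $\tilde{D}\cong\bP^1$ is the normalization of $D$ and $D$ is singular only at $p$ (Corollary~\ref{cor:D away from Sing(X)}), the $\delta$-invariant of $D$ at $p$ equals $1$, so $(D,p)$ is a node or a cusp and $\tilde{D}$ meets $\Delta$ in exactly one or exactly two points.

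Next I would intersect $K_Y+\tilde{D}+A\sim 0$ with the central curve $E_0$ of a fork singularity (self-intersection $-b$, with three neighbouring curves $E_{j_1},E_{j_2},E_{j_3}$), obtaining
\[
a_0b=(b-2)+(\tilde{D}.E_0)+(a_{j_1}+a_{j_2}+a_{j_3})\ge b+1,
\]
so $a_0\ge 2$; in particular a fork forces $A\neq\Delta$. Hence if $A=\Delta$, then $\Delta$ cannot be a fork, so by the classification of quotient singularities recalled above $(X,p)$ is a cyclic singularity, and $K_Y+\tilde{D}+\Delta=K_Y+\tilde{D}+A=\pi^*(K_X+D)\sim 0$. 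This is the first alternative in the statement.

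It then remains to treat the case $A\supsetneq\Delta$. Here $K_Y+\tilde{D}+\Delta\sim-(A-\Delta)$ is linearly equivalent to the negative of a nonzero effective divisor, so it has negative degree against an ample class and no nonzero global section; running this through the residue sequence \eqref{eq:residue} on the smooth rational surface $Y$ (exactly as in the proof of Lemma~\ref{lem:K_X+D}) yields $p_a(\tilde{D}+\Delta)=0$, i.e.\ the connected reduced curve $\tilde{D}+\Delta$ is a rational tree. Consequently $\tilde{D}$ meets $\Delta$ transversally at a single smooth point of a single component $E_{i_0}$, with $a_{i_0}=(A.\tilde{D})=2$ and $a_j\ge 1$ for the remaining components (so $(D,p)$ is a cusp). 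One now separates the cyclic case from the fork case. If $\Delta$ were a chain, the equations $(K_Y+\tilde{D}+A).E_j=0$ determine the $a_j$ (they solve to $\vec{a}=-M^{-1}\vec{e}_{i_0}-\vec{d}$, with $M$ the intersection matrix of $\Delta$ and $\vec{d}$ the discrepancy vector), and imposing $a_j\in\bZ_{\ge 1}$ together with $a_{i_0}=2$ turns out to be incompatible with the two equations coming from the end curves of the chain, a contradiction. Hence $\Delta$ is a fork and $(X,p)$ is of type $\langle b;2,1;3,1;3,2\rangle$ or $\langle b;2;3;5\rangle$; and writing out $(K_Y+\tilde{D}+A).E_j=0$ for the (short) branch curves, using $a_{i_0}=2$, $a_0\ge 2$ and $a_j\ge 1$, a finite check over the possible positions of $E_{i_0}$ in each of the two fork types forces $b=2$.

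The step I expect to be the main obstacle is precisely this closing bookkeeping: excluding the chain with $A\neq\Delta$, and pinning down $b=2$, both amount to solving the linear system $\{(K_Y+\tilde{D}+A).E_j=0\}$ under the integrality and positivity constraints on the $a_j$. Rather than grinding branch by branch, I would try to package it via the Zariski decomposition $-K_Y=\pi^*(-K_X)+\sum|d_i|E_i$, whose positive part satisfies $\pi^*(-K_X).E_j=0$ and $\pi^*(-K_X).\tilde{D}=(-K_X)^2>0$; this constrains $\tilde{D}^2$ and the $a_j$ to a very small range and should make the final identification essentially immediate.
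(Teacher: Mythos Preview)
Your preliminary reductions are correct and match the paper: you correctly derive $(A.\tilde{D})=2$, $\tilde{D}\cong\bP^1$, the intersection inequality at the central curve showing a fork forces some $a_i\ge 2$, and the conclusion that $\tilde{D}+\Delta$ is a rational tree with $(\tilde{D}.\Delta)=1$ once $A\supsetneq\Delta$. The gap is exactly where you flag it: your plan to exclude the cyclic case with $A\neq\Delta$ and to pin down $b=2$ in the fork case by ``solving the linear system and checking integrality at the end curves'' is not carried out, and as stated it is not a finite check --- in the cyclic case you are ranging over all $\frac{1}{n}(1,q)$ with $(n,q)=(n,q+1)=1$, and in the fork case over all $b\ge 2$. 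The end-curve equations alone do not produce a contradiction (for instance, at an end curve $E_1$ with $i_0\neq 1$ one just gets $a_2=(a_1-1)b_1+2$, which is perfectly compatible with positive integer $a_j$), and the Zariski decomposition suggestion does not visibly bound $b$ or close the argument.

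The paper bypasses all of this with a single geometric observation you are missing: whenever $\Delta$ is the \emph{fundamental cycle} of $(X,p)$ (equivalently $-\Delta$ is $\pi$-nef, i.e.\ $(E_i.\Delta)\le 0$ for every component), the condition $(\tilde{D}.\Delta)=1$ forces $D=\pi(\tilde{D})$ to be smooth at $p$ by \cite[Lemma~4.12]{Keel-McKernan}; but then $D\in|-K_X|$ would be a smooth rational curve, contradicting Corollary~\ref{cor:when a curve is P1}. Since $\Delta$ is the fundamental cycle precisely when $(X,p)$ is cyclic or when the central curve of a fork has self-intersection $\le -3$, this immediately rules out the cyclic case with $A\neq\Delta$ and the fork case with $b\ge 3$ in one stroke, leaving only $b=2$. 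If you prefer to avoid the citation, the same content can be phrased in your language: from $(K_Y+\tilde{D}+A).\Delta=0$ and $(K_Y+\Delta).\Delta=2p_a(\Delta)-2=-2$ one gets $(A-\Delta).\Delta=2-(\tilde{D}.\Delta)=1$, while $A-\Delta\ge 0$ and $(-\Delta).E_i\ge 0$ force $(A-\Delta).\Delta\le 0$; this is the uniform contradiction your end-curve computation was groping for, and it works exactly when $-\Delta$ is $\pi$-nef.
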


\begin{proof}
We have $K_{Y}+\tilde{D}+\sum a_{i}E_{i}\sim0$ as in \eqref{eq:discrep},
where $a_{i}\in\mathbb{Z}_{>0}$ and $E_{i}\subset\mathrm{Supp}\,\Delta$
by Corollary \ref{cor:D away from Sing(X)}. If some $a_{i}\ge2$,
then $|K_{Y}+\tilde{D}+\Delta|=\emptyset$, hence by Lemma \ref{lem:K_X+D}
$\tilde{D}+\Delta$ is a rational tree, in particular $(\tilde{D}.\Delta)=1$,
and if in addition $\Delta$ is the fundamental cycle of $(X,p)$ (i.e. $-\Delta$
is $\pi$-nef; this is the case if $(X,p)$ has cyclic singularity
or if the central curve of $\Delta$ has self-intersection at most
$-3$), then by \cite[Lemma 4.12]{Keel-McKernan}, $D$ is a smooth
rational curve on $X$, but by Corollary \ref{cor:when a curve is P1},
this contradicts our assumption as $|K_{X}+D|\neq\emptyset$. We already
know that the singularity of $X$ at $p$ is cyclic, $\left\langle b;2,1;3,1;3,2\right\rangle $
or $\left\langle b;2;3;5\right\rangle $, hence in the first case
all $a_{i}=1$, and we claim that in the latter two cases at least
one $a_{i}\ge2$, it would then follow that $b=2$. Suppose all $a_{i}=1$,
then $K_{Y}+\tilde{D}+\Delta\sim0$, but the LHS has positive intersection
with the central curve of $\Delta$, a contradiction.
\end{proof}

We need a more careful analysis in the cyclic case, so assume for
the moment that $X$ has cyclic singularity at $p$. As above, $D$
is an effective divisor in $|-K_{X}|$ and $\tilde{D}$ its birational
transform on $Y$, while $\Delta=\pi^{-1}(p)_{\mathrm{red}}$.

\begin{lem}
\label{lem:D is (-1)-curve}$\tilde{D}$ is a $(-1)$-curve on $Y$.
\end{lem}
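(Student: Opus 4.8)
The plan is to use the fundamental relation \eqref{eq:discrep} together with adjunction on $\tilde D$. We are in the cyclic case, so by Proposition \ref{prop:3 types of Sing(X)} we know all the coefficients $a_i=1$, i.e.\ $K_Y+\tilde D+\Delta\sim 0$ with $\Delta=\pi^{-1}(p)_{\mathrm{red}}$ the full reduced fibre over $p$; moreover Corollary \ref{cor:D away from Sing(X)} tells us $\tilde D$ meets $E$ only inside $\mathrm{Supp}\,\Delta$. First I would compute $p_a(\tilde D)$. Since $X$ is a rational surface with only rational (hence klt) singularities, $D\sim -K_X$ is an integral curve, and because $X$ satisfies condition (1) of Proposition \ref{prop:criterion for no P1}, $D$ is \emph{not} a smooth rational curve, so by Corollary \ref{cor:when a curve is P1} we have $|K_X+D|\neq\emptyset$; but $K_X+D\sim 0$, so in fact $|K_X+D|$ consists of the single divisor $0$ and $h^0(D,\omega_D)=1$. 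Since $D$ is an integral (reduced, irreducible) curve, $h^0(D,\omega_D)=p_a(D)$, so $p_a(D)=1$. This is the key numerical input: $D$ is an arithmetic genus one curve on $X$.

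Next I would transfer this to $Y$. Adjunction on $Y$ gives $2p_a(\tilde D)-2=(K_Y+\tilde D).\tilde D$. Using $K_Y+\tilde D\sim -\Delta$, this becomes $2p_a(\tilde D)-2=-(\Delta.\tilde D)$, so $(\Delta.\tilde D)=2-2p_a(\tilde D)$. Because $\tilde D$ is the strict transform of a curve passing through $p$, it must meet the exceptional fibre $\Delta$, so $(\Delta.\tilde D)\ge 1$, forcing $p_a(\tilde D)=0$ and $(\Delta.\tilde D)=1$; thus $\tilde D\cong\bP^1$ and $\tilde D$ meets $\Delta$ transversally at a single point on a single component $E_j$ with $(\tilde D.E_j)=1$. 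Now I compute the self-intersection: intersecting $K_Y+\tilde D+\Delta\sim 0$ with $\tilde D$ gives $(K_Y.\tilde D)+\tilde D^2+(\Delta.\tilde D)=0$, i.e.\ $(K_Y.\tilde D)+\tilde D^2+1=0$; combined with $(K_Y.\tilde D)+\tilde D^2=2p_a(\tilde D)-2=-2$ we get $-2+1=-1$, which is automatically satisfied, so I instead use adjunction directly: $\tilde D^2=2p_a(\tilde D)-2-(K_Y.\tilde D)$. To pin down $\tilde D^2$ I would argue that $(K_Y.\tilde D)\ge -1$ (since $Y$ is a smooth rational surface on which $\tilde D$ is an irreducible curve, $(K_Y.\tilde D)\ge -1$ by the genus formula whenever $\tilde D^2\ge 0$, but more robustly one uses that $\tilde D$ is not a curve on which $-K_Y$ can have arbitrarily large degree). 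The cleanest route: from $p_a(\tilde D)=0$, adjunction reads $\tilde D^2+(K_Y.\tilde D)=-2$, and since $-K_Y\sim \tilde D+\Delta$ is effective with $\tilde D$ and $\Delta$ having no common component, $(K_Y.\tilde D)=-(\tilde D^2)-(\Delta.\tilde D)=-\tilde D^2-1$; substituting back gives $-2=-2$, still tautological — so the actual content is that $(K_Y.\tilde D)=-\tilde D^2-1$ and I must show $\tilde D^2=-1$ by a separate argument.

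The real point, and the step I expect to be the main obstacle, is ruling out $\tilde D^2\le -2$ and $\tilde D^2\ge 0$. For $\tilde D^2\ge 0$: if $\tilde D^2\ge 0$ on the rational surface $Y$ then $|\tilde D|$ would move, but $\tilde D\sim -K_Y-\Delta$ and $\pi_*\tilde D\sim D\sim -K_X$ generates $\mathrm{Cl}(X)\otimes\bQ$ which is one-dimensional and $-K_X$ is ample of small volume; more directly, $D^2=(-K_X)^2>0$ but $D$ is the \emph{only} effective anticanonical divisor (as $h^0(-K_X)=h^0(\omega_X^{-1})$ and $\mathrm{Cl}(X)=\bZ\cdot[-K_X]$ forces $h^0(-mK_X)$ to grow like a single sequence), so $|-K_X|$ has no moving part, hence $|\tilde D|$ has no moving part and $\tilde D^2<0$. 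For $\tilde D^2\le -2$: then $K_Y\cdot\tilde D=-\tilde D^2-1\ge 1>0$, so $\tilde D$ is in the stable base locus of $|K_Y|$-side; but I can instead contract: run the argument that $\tilde D+\Delta$ supports $-K_Y$ and that $Y$ is obtained from its relative minimal model by blowups, and a curve with $\tilde D^2\le -2$ and $p_a=0$ must be exceptional for the minimal resolution of some singularity — contradicting that $\tilde D$ is the strict transform of a curve on $X$ (so $\tilde D$ is not $\pi$-exceptional) while any \emph{other} contraction would have to factor through the chain structure of $\Delta$. Concretely I would show: if $\tilde D^2\le -2$, then $\tilde D$ together with the chain $\Delta$ (a chain, since the singularity is cyclic) forms a configuration of negative curves whose contraction would produce a singularity, forcing $X$ to have a second singular point or a worse singularity at $p$, contradicting that $\pi$ is the \emph{minimal} resolution and $\Delta=\pi^{-1}(p)$. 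Thus $\tilde D^2=-1$, i.e.\ $\tilde D$ is a $(-1)$-curve. I expect the delicate bookkeeping to be in this last dichotomy, specifically in showing $\tilde D^2\ge 0$ is impossible — the rest is formal adjunction.
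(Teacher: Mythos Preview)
Your adjunction computation proving $p_a(\tilde D)=0$ is fine, but there is an arithmetic slip: from $2p_a(\tilde D)-2=-(\Delta.\tilde D)$ and $p_a(\tilde D)=0$ you get $(\Delta.\tilde D)=2$, not $1$. This matches the later fact (Theorem \ref{thm:Classification}, case (1)) that the dual graph of $\tilde D+\Delta$ is a loop. The paper instead observes $|K_Y+\tilde D|=|-\Delta|=\emptyset$ and invokes Lemma \ref{lem:K_X+D}; both routes give $\tilde D\cong\bP^1$.

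The genuine gaps are in pinning down $\tilde D^2$. For $\tilde D^2\le -2$, your contractibility argument does not work as written: $\tilde D$ is \emph{not} $\pi$-exceptional, so nothing about the minimality of $\pi$ is contradicted by $\tilde D+\Delta$ being a negative configuration. The clean argument (this is \cite[Lemma 1.3]{Zhang-klt}, which the paper cites) is: write $K_Y=\pi^*K_X-G$ with $G\ge 0$ supported on $E$; since $\tilde D\not\subset E$ and $-K_X$ is ample, $(K_Y.\tilde D)\le(\pi^*K_X.\tilde D)=(K_X.D)<0$. Combined with adjunction $(K_Y.\tilde D)=-2-\tilde D^2$, this forces $\tilde D^2\ge -1$.

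For $\tilde D^2\ge 0$, your argument is incomplete: you assert that $D$ is the unique member of $|-K_X|$ and hence $|\tilde D|$ cannot move, but neither claim is justified (there is no a priori bound on $h^0(-K_X)$, and even if there were, the relation between $|\tilde D|$ and $|-K_X|$ via $\pi_*$ needs work). The paper's route here is quite different and more substantial: if $\tilde D^2\ge 0$ then $|\tilde D|$ is base-point free (via $H^1(\cO_Y)=0$ and $\tilde D\cong\bP^1$), so members of $|-K_X|$ can be moved through any point of $X$; by Corollary \ref{cor:D away from Sing(X)} this forces $p$ to be the \emph{only} singular point. Then any $(-1)$-curve $C\subset Y$ satisfies $(C.\tilde D+\Delta)=(-K_Y.C)=1$, so $(C.\Delta)\le 1$ and $\pi(C)$ is a smooth rational curve on $X$ by \cite[Lemma 4.12]{Keel-McKernan}, contradicting (1). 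Hence $Y$ has no $(-1)$-curves, $Y\cong\mathbb{F}_e$, $X\cong\bP(1,1,e)$, again contradicting (1). You correctly flagged this step as the delicate one, but the mechanism you proposed is not the one that works.
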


\begin{proof}
As $|K_{Y}+\tilde{D}|=|-\Delta|=\emptyset$, $\tilde{D}$ is a smooth
rational curve by Lemma \ref{lem:K_X+D}. We first show that $(\tilde{D}^{2})<0$.
Suppose $(\tilde{D}^{2})\ge0$, then the exact sequence $0\rightarrow\mathcal{O}_{Y}\rightarrow\mathcal{O}_{Y}(\tilde{D})\rightarrow\mathcal{O}_{\tilde{D}}(\tilde{D})\rightarrow0$
and $H^{1}(Y,\mathcal{O}_{Y})=0$ imply that $\mathcal{O}_{Y}(\tilde{D})$
is base point free, hence we can choose $D$ to pass through any point
on $X$. By \ref{cor:D away from Sing(X)}, this implies that $p$
is the unique singular point of $X$. If $C$ is a $(-1)$-curve on
$Y$, then $C$ is not contained in the support of $\tilde{D}+\Delta$,
and as $K_{Y}+\tilde{D}+\Delta\sim0$ we get $(K_{Y}+\tilde{D}+\Delta.C)=-1+(\tilde{D}+\Delta.C)=0$,
hence $(C.\Delta)\le1$ and $\pi(C)$ is a smooth rational curve on
$X$ (using \cite[Lemma 4.12]{Keel-McKernan} as in the proof of \ref{prop:3 types of Sing(X)}),
a contradiction. It follows that $Y$ does not contain any $(-1)$-curves,
hence $Y\cong\mathbb{F}_{e}(e\ge2)$ and $X\cong\mathbb{P}(1,1,e)$,
but then $X$ contains many smooth rational curves, so these cases
won't occur. Hence $(\tilde{D}^{2})<0$. If $(\tilde{D}^{2})\le-2$
then by \cite[Lemma 1.3]{Zhang-klt} $\tilde{D}$ is contained
in $E$ (the exceptional locus of $\pi$), so the lemma follows.
\end{proof}

\begin{lem}
\label{lem:description of cyclic case}There exists a birational morphism
$f:Y\rightarrow\bar{Y}$ such that
\begin{enumerate}
\item $\bar{Y}$ is an $S(E_8)$;
\item $\mathrm{Ex}(f)$ consists of all but one component of $\tilde{D}+E$;
\item $f(\tilde{D})$ is a smooth point on $\bar{Y}$.
\end{enumerate}
\end{lem}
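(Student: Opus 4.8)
We are in the cyclic case, so by Lemma \ref{lem:D is (-1)-curve} the strict transform $\tilde{D}$ is a $(-1)$-curve on $Y$, and by Proposition \ref{prop:3 types of Sing(X)} we have $K_Y + \tilde{D} + \Delta \sim 0$ where $\Delta$ is the fundamental cycle of the cyclic singularity at $p$ and $(\tilde D.\Delta)=1$. Since $X$ is not Gorenstein, $p$ is the unique non Du Val point, and (by Corollary \ref{cor:D away from Sing(X)}) $D$ misses all other singular points of $X$; those are Du Val, so their exceptional curves are $(-2)$-curves disjoint from $\tilde D$. The idea is to contract curves inside $\tilde D + E$ one at a time, starting from $\tilde D$, so as to land on a Gorenstein log del Pezzo of the form $S(E_8)$ while keeping track of the class $K_{\bar Y}$.

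**Key steps.** First I would contract $\tilde D$ itself: since $\tilde D$ is a $(-1)$-curve, we get $f_1:Y\to Y_1$ with $K_Y + \tilde D = f_1^* K_{Y_1}$, so on $Y_1$ we have $f_{1*}\Delta \sim -K_{Y_1}$, and because $(\tilde D.\Delta)=1$ the image of the component of $\Delta$ meeting $\tilde D$ picks up self-intersection raised by one. Next I would argue that $Y_1$ still contains a $(-1)$-curve coming from $\tilde D + E$ — concretely, the chain $\Delta$ of the cyclic singularity, after the first blow-down, has an end component (or becomes one) whose self-intersection has gone from $-2$ to $-1$, or more generally one analyzes the linear chain using the continued-fraction description of $\frac1n(1,q)$. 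Iterating, at each stage we have a smooth rational surface $Y_i$ with an anticanonical cycle, a $(-1)$-curve inside it which we contract, and the self-intersections of the remaining curves are governed by the combinatorics of the original chain plus the off-chain Du Val $(-2)$-curves, which stay $(-2)$ until the very end. The process must terminate: the Picard number drops each time, and when no component of the anticanonical divisor is contractible the surface is relatively minimal with $-K$ nef, forcing (by the classification, e.g. \cite[Theorem 3.2]{sakaiams} combined with the list of minimal log del Pezzos) the output $\bar Y$ to be either $\mathbb{P}^2$, a Hirzebruch surface, or a surface whose minimal resolution is an $S(E_8)$. One then rules out $\mathbb{P}^2$ and $\mathbb{F}_e$: in those cases $\bar Y$ would be $\mathbb{P}^2$, $\mathbb{P}^1\times\mathbb{P}^1$ or $\mathbb{F}_e$, on which the anticanonical divisor moves (or there are obvious $(-1)$-curves meeting it in one point), which would produce a smooth rational curve on $X$ via \cite[Lemma 4.12]{Keel-McKernan} — exactly as in the proof of Lemma \ref{lem:D is (-1)-curve} — contradicting our standing assumption (1). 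Hence $\bar Y$ is an $S(E_8)$. By construction $\mathrm{Ex}(f)$ (where $f=f_k\circ\cdots\circ f_1$) is a union of components of $\tilde D + E$; a Picard-number count shows it is all but one of them, and since $\tilde D$ was contracted first and is a $(-1)$-curve meeting the rest of $\tilde D + E$ transversally in one point, $f(\tilde D)$ is a smooth point of $\bar Y$.

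**The main obstacle.** The delicate part is not the existence of $f$ but controlling the intermediate surfaces: I need to know that at each step there genuinely is a $(-1)$-curve \emph{among the components of the proper transform of} $\tilde D + E$ to contract (not somewhere else on $Y_i$), and that the self-intersection bookkeeping forces the remaining Du Val curves to organize into a single $E_8$ configuration at the end rather than something else on Brieskorn's list. This is where the hypothesis that $X$ is cyclic (so $\Delta$ is a linear chain, analyzed by the continued fraction $n/q$) and the constraint $(\tilde D.\Delta)=1$ are used crucially, together with the fact that the other singularities being Du Val means their $(-2)$-curves only enter at the final relatively-minimal stage. I would organize this as an explicit induction on the length of the chain $\Delta$, using the standard fact that contracting a $(-1)$-curve at the end of (or attached to) a linear chain either shortens the chain or decreases the magnitude of a self-intersection, so the process visibly terminates and the terminal configuration is pinned down.
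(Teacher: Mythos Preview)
Your proposal contains a factual slip that derails the combinatorics: in the cyclic case Proposition~\ref{prop:3 types of Sing(X)} gives $K_Y+\tilde D+\Delta\sim 0$ with \emph{all} $a_i=1$, and intersecting with each component of the chain $\Delta$ forces $(\tilde D.\Delta)=2$, not $1$. Equivalently, the dual graph of $\tilde D+\Delta$ is a \emph{loop} (this is exactly what the paper uses in the proof of Theorem~\ref{thm:Classification}(1)). So $\tilde D$ meets both ends of $\Delta$ (or the unique component twice), and your picture of ``the component of $\Delta$ meeting $\tilde D$'' and of an end of $\Delta$ dropping from $-2$ to $-1$ is off from the start.

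More seriously, the ``main obstacle'' you flag is a genuine gap you do not close. After contracting $\tilde D$ the cycle can leave you with no $(-1)$-curve among the remaining components (e.g.\ two $(-2)$-curves meeting twice), and the Du Val $E_8$ blocks of $E$, being disjoint $(-2)$-configurations, never become $(-1)$-curves at all. So a direct ``contract a $(-1)$-curve in $\tilde D+E$ at each step'' scheme cannot reach $\bar Y$ without an extra move. The paper avoids this entirely: it first contracts $E\setminus\Delta$ to genuine $E_8$ points to get a Gorenstein $Y_0$, then runs the $K$-MMP on $Y_0$. The point is not to \emph{find} $(-1)$-curves inside $\tilde D+E$, but the converse: any $(-1)$-curve the MMP hands you must already be a component of (the image of) $\tilde D+E$, since otherwise its strict transform on $Y$ would be a $(-1)$-curve with $(C.\Delta)\le 1$ and hence push down to a smooth rational curve on $X$ (same mechanism as in Lemma~\ref{lem:D is (-1)-curve}). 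The Mori fibre space case $Y_m\to Z$ with $\dim Z=1$ is then excluded not by a mobility argument but by the concrete auxiliary lemma that $-K_{\mathbb F_e}$ (for $e=0$ or $e\ge 2$) cannot be written as a sum of two irreducible curves generating $\mathrm{Pic}$, using that the surviving components of $\tilde D+E$ freely generate $\mathrm{Cl}(Y_m)$. That leaves $Y_m$ a rank-one Gorenstein del Pezzo with $\mathrm{Cl}$ generated by $-K$, i.e.\ an $S(E_8)$.
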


\begin{proof}
Let $Y\rightarrow Y_{0}$ be the contraction of all curves in $E\backslash\Delta$,
then every closed point of $Y_{0}$ is either smooth or an $E_{8}$-singularity
(every Du Val singularity of $X$ is an $E_{8}$-singularity). We
run the $K$-negative MMP starting with $Y_{0}$:
\[
Y_{0}\stackrel{\phi_{1}}{\longrightarrow}Y_{1}\stackrel{\phi_{2}}{\longrightarrow}\cdots\stackrel{\phi_{m}}{\longrightarrow}Y_{m}\stackrel{g}{\longrightarrow}Z
\]
where each step is the contraction of an extremal ray, $\phi_{i}$'s
are birational, and $\dim Z<2$ ($Y_{0}$ is a Gorenstein rational
surface, so the MMP stops at a Mori fiber space). By \cite[Lemma 3.3]{Keel-McKernan},
each $\phi_{i}$ is the contraction of a $(-1)$-curve contained in
the smooth locus of $Y_{i-1}$. If this $(-1)$-curve is not a component of the image of $\tilde{D}+E$, let $C$ be its strict transform in
$Y$, then $C$ is a smooth rational curve with negative self-intersection,
hence by \cite[Lemma 1.3]{Zhang-klt} it is a $(-1)$-curve.
Now the same argument as in Lemma \ref{lem:D is (-1)-curve} shows
that $(C.\Delta)\le1$ and $\pi(C)$ is a smooth rational curve in
$X$, a contradiction. So the exceptional locus of $Y_{0}\rightarrow Y_{m}$
is contained in $\tilde{D}+E$. In particular, since $\tilde{D}$ is the only component of $\tilde{D}+E$ that is a $(-1)$-curve, 
$\phi_{1}$ is the contraction of $\tilde{D}$.

We claim that $Z$ is a point. Suppose it is not, then $g$ is a $\mathbb{P}^{1}$-fibration.
By \cite[Lemma 3.4]{Keel-McKernan}, as $Y_{m}$ has only singularities
of $E_{8}$ type, it is actually smooth and isomorphic to $\mathbb{F}_{e}$
for some $e\ge0$. If $e=1$, one can choose to contract the $(-1)$-curve from $Y_m$ and then $Y_{m+1}=\bP^2$ while $Z$ is a point. So we may assume $e=0$ or $e\ge2$. Since $\mathrm{Cl}(X)$ is generated by $-K_{X}$,
we see that $\mathrm{Cl}(Y)$ is freely generated by $-K_{Y}$ and
the components of $E$, or equivalently, by the components of $\tilde{D}+E$.
Let $\Gamma$ be the image of $\tilde{D}+E$ on $Y_{m}$, we have
$K_{Y_{m}}+\Gamma\sim0$ and the irreducible components of $\Gamma$
freely generate $\mathrm{Cl}(Y_{m})$. As $\rho(Y_{m})=2$
in this case, $\Gamma$ has exactly two irreducible components. However,
this contradicts the next lemma.

Hence $Z$ is a point and $Y_{m}$ is a Gorenstein rank one del Pezzo.
By construction $\mathrm{Cl}(Y_{m})$ is generated by the effective
divisor $\Gamma\sim-K_{Y_{m}}$, in other words, $Y_{m}$ does not
contain any smooth rational curves, hence by the discussion on Du Val case,
$Y_{m}$ is an $S(E_{8})$, and the lemma follows by taking $\bar{Y}=Y_{m}$.
\end{proof}

The following lemma is used in the above proof.

\begin{lem}
Let $S=\mathbb{F}_{e}$ where $e=0$ or $e\ge2$, then $-K_{S}$ cannot be written as
the sum of two irreducible effective divisors that generate $\mathrm{Pic}(S)$.
\end{lem}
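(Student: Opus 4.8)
The plan is to reduce everything to integer bookkeeping on $\mathrm{Pic}(S)=\mathbb Z[C_0]\oplus\mathbb Z[f]$, where $f$ is a fibre of the ruling and $C_0$ the section with $C_0^2=-e$, $C_0\cdot f=1$, $f^2=0$. First I would record, via adjunction, that $-K_S\sim 2C_0+(e+2)f$. Then, assuming for contradiction that $-K_S=A+B$ with $A,B$ irreducible effective divisors whose classes generate $\mathrm{Pic}(S)$, I would write $[A]=a_1C_0+b_1f$ and $[B]=a_2C_0+b_2f$; intersecting with the nef class $f$ gives $a_1,a_2\ge0$, so comparison with $-K_S$ yields $a_1+a_2=2$ and $b_1+b_2=e+2$, while the generation hypothesis becomes exactly $a_1b_2-a_2b_1=\pm1$.

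Next I would split into the two cases $(a_1,a_2)=(0,2)$ and $(a_1,a_2)=(1,1)$ (up to interchanging $A$ and $B$). In the first case the determinant equals $-2b_1$, which is even, hence cannot be $\pm1$. In the second case it equals $b_2-b_1$, so $b_1,b_2$ are consecutive integers; combined with $b_1+b_2=e+2$ this forces $e$ to be odd, hence $e\ge3$ (the value $e=1$, where the statement is in fact false, is excluded by hypothesis). Taking $b_1\le b_2$ we then get $b_1=(e+1)/2$, and $e\ge3$ gives $0<b_1<e$. Now $A$ is an irreducible curve with $[A]=C_0+b_1f$; as $b_1>0$ this is not the class of $C_0$, so $A\ne C_0$, yet $A\cdot C_0=b_1-e<0$, contradicting that two distinct irreducible curves on a smooth surface meet non-negatively.

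I do not anticipate a genuine obstacle: the only geometric ingredient is that last non-negativity of intersection numbers, used a single time, and everything else is the numerology imposed by $-K_S$ together with unimodularity of the base-change matrix. The point requiring a little care is that the case split on $(a_1,a_2)$ be exhaustive and that the small values $e=0$ and $e=2$ not be missed — but they are covered, since the $(0,2)$ branch is killed by parity of the determinant (for every $e$) and the $(1,1)$ branch is killed by parity of $e$ whenever $e$ is even.
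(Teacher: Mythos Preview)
Your proof is correct. Both your argument and the paper's set up coordinates $[A]=a_1C_0+b_1f$, $[B]=a_2C_0+b_2f$ and split on $(a_1,a_2)\in\{(0,2),(1,1)\}$, but you and the paper apply the two available constraints in opposite orders. The paper leads with irreducibility: any irreducible curve other than $C_0$ on $\mathbb{F}_e$ satisfies $b\ge ae$, so in the $(1,1)$ case one gets $m_i\ge e$ and hence $e+2=m_1+m_2\ge 2e$, forcing $e\le 2$; the generation hypothesis is only invoked at the very end to exclude $m_1=m_2=e=2$. You instead lead with the unimodularity condition $a_1b_2-a_2b_1=\pm1$, which by parity kills $(0,2)$ outright and forces $e$ odd in the $(1,1)$ case; irreducibility then enters only once, via $A\cdot C_0\ge 0$, to finish off odd $e\ge3$. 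Your route has the minor advantage of handling $e=0$ uniformly (the paper dismisses it as ``easy to see''), while the paper's route makes the geometry of the effective cone on $\mathbb{F}_e$ more visible. Either way the argument is a few lines of bookkeeping.
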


\begin{proof}
It is quite easy to see that when $e=0$ such a decomposition of $-K_S$ is not possible, so we assume $e\ge2$.
Let $C_{0}$ be the unique section of negative self-intersection and
$F$ be a fiber, then $\mathrm{Pic}(S)$ is freely generated by $C_{0}$
and $F$. If $M=aC_{0}+bF$ represents an irreducible curve then $M=aC_{0}$,
or $b\ge ae\ge0$. Suppose $-K_{S}\sim2C_{0}+(e+2)F\sim M_{1}+M_{2}$
where $M_{1}$ and $M_{2}$ are irreducible and generate $\mathrm{Pic}(S)$.
Then we must have $M_{i}=C_{0}+m_{i}F$ with $m_{i}\ge e$ and $m_{1}+m_{2}=e+2$,
this is only possible when $m_{1}=m_{2}=e=2$, but then $M_{1}=M_{2}$
can not generate $\mathrm{Pic}(S)$.
\end{proof}

Back to the general case. To finish the classification, let us now
construct some surfaces that satisfy the conditions in Proposition
\ref{prop:criterion for no P1}. Let $\bar{Y}$ be an $S(E_{8})$
with $\Gamma\in|-K_{\bar{Y}}|$ a rational curve. We have $\Gamma\subset\bar{Y}^{0}$
and $(\Gamma^{2})=1$. Let $q$ be the unique double point of $\Gamma$.
Let $Y\rightarrow\bar{Y}$ be the blowup at $q_{1}=q,$ $q_{2}$,
$\cdots$ , $q_{m}$ where each $q_{i}$ is infinitely near $q_{i-1}$
($i>1$). If $q$ is a node of $\Gamma$, we also require that $q_{i}$
always lies on the strict transform of either $\Gamma$ or exceptional
curves of previous blowup (there are 2 different choices of $q_{i}$
for each $i>1$). If $q$ is a cusp then we require that $m=1,2$ or $4$
and that $q_{i}$ lies on the strict transform of $\Gamma$ for $i=2,3$ while $q_{4}$
is away from $\Gamma$ and previous exceptional curves. Let $E_i$ be the strict transform of the exceptional curve coming from the blowup of $q_i$. We define
$X(\bar{Y},\Gamma;q_{1},\cdots,q_{m})$ to be the contraction from
$Y$ of $\Gamma$ and $E_i$ ($i=1,\cdots,m-1$). It has two singular points, one of which
is an $E_{8}$ singularity and the other is a cyclic singularity except
when $\Gamma$ has a cusp at $q$ and $m=4$, in which case the second
singularity has type $\left\langle 2;2,1;3,1;5,1\right\rangle$. Argue inductively, we get $-K_Y\sim \Gamma+\sum_{i=1}^m E_i$ unless $\Gamma$ has a cusp at $q$ and $m=4$, in which case we have $-K_Y\sim \Gamma+E_1+E_2+2E_3+E_4$ instead. As $\mathrm{Cl}(Y)$ is generated by $\Gamma$ and all the $E_i$, it is not hard to verify that $X(\bar{Y},\Gamma;q_{1},\cdots,q_{m})$
satisfies condition (2) in Proposition \ref{prop:criterion for no P1}.

\begin{thm}
\label{thm:Classification}If $X$ is a surface with only
quotient singularities that satisfies the conditions in Proposition
\ref{prop:criterion for no P1}, then it is either an $S(E_{8})$
or one of the $X(\bar{Y},\Gamma;q_{1},\cdots,q_{m})$ constructed
above.
\end{thm}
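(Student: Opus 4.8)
The plan is to split into the Gorenstein and non-Gorenstein cases, which have already been separated in the discussion preceding the theorem. If $X$ is Gorenstein, the earlier remark shows $X$ has only $E_8$ singularities and is therefore one of the two $S(E_8)$'s, so nothing remains. So I would assume $X$ is not Gorenstein, let $p$ be the unique non-Du-Val singular point (Lemma~\ref{lem:only one klt}), and set $\Delta=\pi^{-1}(p)_{\mathrm{red}}$. By Proposition~\ref{prop:3 types of Sing(X)}, $(X,p)$ is either cyclic, or of type $\langle 2;2,1;3,1;3,2\rangle$, or of type $\langle 2;2;3;5\rangle$; I need to show the first case produces exactly the $X(\bar Y,\Gamma;q_1,\dots,q_m)$ and that the other two cases are subsumed (they should appear as the cuspidal $m=2$ and $m=4$ instances of the construction, since $\langle 2;2,1;3,1;5,1\rangle$ is listed as the $m=4$ cuspidal singularity and the type $\langle 2;2;3;5\rangle$ with $b=2$ is this same singularity).

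First, in the cyclic case, I would invoke Lemma~\ref{lem:description of cyclic case}: there is a birational morphism $f\colon Y\to\bar Y$ with $\bar Y$ an $S(E_8)$, $\mathrm{Ex}(f)$ equal to all but one component of $\tilde D+E$, and $f(\tilde D)$ a smooth point of $\bar Y$. The first component contracted is $\tilde D$ (a $(-1)$-curve by Lemma~\ref{lem:D is (-1)-curve}), so after contracting $\tilde D$ we are blowing down a chain/tree of curves inside the image of $E$. Reading $f$ backwards realizes $Y$ as an iterated blowup of $\bar Y$. Since $K_Y+\tilde D+E\sim K_Y+\tilde D+\Delta+(\text{Du Val part})\sim 0$ and $f_*(\tilde D+E)=\Gamma$ with $K_{\bar Y}+\Gamma\sim 0$, the image of the $(-1)$-curve chain must be supported on $\Gamma$, which is a rational curve with $(\Gamma^2)=1$ having a unique double point $q$; and since $f(\tilde D)$ is a smooth point of $\bar Y$ lying on the image of the exceptional locus, the first blowup center is forced to be $q$ (this is where $\Gamma$ fails to be smooth, and blowing up $q$ is what must eventually separate $\tilde D$). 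Continuing, each subsequent center $q_i$ is infinitely near $q_{i-1}$ and, by the combinatorics of the dual graph of a cyclic singularity (a chain), must lie on the strict transform of $\Gamma$ or of a previous exceptional curve — exactly the constraints in the construction. The node-versus-cusp dichotomy of $q$ dictates the two sub-cases, and in the cuspidal case the requirement that the resulting singularity be a legitimate quotient singularity from Brieskorn's list (cyclic, or $\langle 2;2,1;3,1;5,1\rangle$) forces $m\in\{1,2,4\}$ and the specific positions of $q_2,q_3,q_4$. Thus $X\cong X(\bar Y,\Gamma;q_1,\dots,q_m)$.

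For the two non-cyclic cases of Proposition~\ref{prop:3 types of Sing(X)}, I would run essentially the same argument: the singularity has $b=2$, so $\Delta$ has a specific star-shaped dual graph, and again after contracting the $(-1)$-curve $\tilde D$ one performs a $K$-negative MMP landing on an $S(E_8)$ $\bar Y$ with $\Gamma\in|-K_{\bar Y}|$ (the argument of Lemma~\ref{lem:description of cyclic case} did not really use that the singularity was cyclic except to know $\Delta$ is the fundamental cycle, which holds once the central curve has self-intersection $\le -3$ — but for $b=2$ one checks directly, as in the proof of Proposition~\ref{prop:3 types of Sing(X)}, that some $a_i\ge 2$ and the same "no stray $(-1)$-curve" argument applies). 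Tracking the discrepancy vector $\sum a_i E_i$ and the graph of $\Delta$, one identifies the iterated blowup pattern: for $\langle 2;2,1;3,1;3,2\rangle$ this is the cuspidal construction with $m=2$, and for $\langle 2;2;3;5\rangle$ with $b=2$, i.e. $\langle 2;2,1;3,1;5,1\rangle$, it is the cuspidal construction with $m=4$ and $-K_Y\sim\Gamma+E_1+E_2+2E_3+E_4$ as recorded above. So these are not new surfaces — they are already among the $X(\bar Y,\Gamma;q_1,\dots,q_m)$.

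The main obstacle, I expect, is the bookkeeping in the cuspidal case: verifying precisely which infinitely-near points $q_2,q_3,q_4$ must be chosen (on $\Gamma$, or off $\Gamma$ and off earlier exceptional curves) so that the contracted configuration is exactly one of the allowed quotient singularities, and ruling out the intermediate possibilities $m=3$ or other choices of $q_i$ by showing they either create a smooth rational curve on $X$ (via the $(-1)$-curve argument of Lemmas~\ref{lem:D is (-1)-curve} and \ref{lem:description of cyclic case}) or produce a non-quotient singularity. This amounts to a finite case check against Brieskorn's table combined with the dual-graph surgery implicit in Lemma~\ref{lem:description of cyclic case}, but it is the step where one must be careful rather than routine. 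Once that is done, the matching of each case with the explicitly constructed $X(\bar Y,\Gamma;q_1,\dots,q_m)$ — whose defining property (2) of Proposition~\ref{prop:criterion for no P1} was already verified in the construction — completes the proof.
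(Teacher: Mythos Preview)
Your overall structure matches the paper's, and the cyclic case is handled along the right lines (though the paper argues more directly: from $K_{Y_0}+\tilde D+\Delta\sim0$ the dual graph of $\tilde D+\Delta$ is a single loop, so in the cuspidal sub-case $\tilde D+\Delta$ has at most three components and hence $m\le2$; no appeal to Brieskorn's list is needed there). The genuine gap is in your treatment of the two non-cyclic cases.

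You assert that $\langle 2;2,1;3,1;3,2\rangle$ ``is the cuspidal construction with $m=2$''. This is false: for cuspidal $\Gamma$ and $m\le2$ the contracted configuration $\Gamma\cup E_1\cup\cdots\cup E_{m-1}$ is a chain, so the resulting singularity is \emph{cyclic}, not a fork. In fact no choice of $m$ or of the $q_i$ in the construction produces a $\langle 2;2,1;3,1;3,2\rangle$ singularity. The paper does not fit this case into the construction at all; it \emph{excludes} it by a lattice argument you are missing. Since $\mathrm{Cl}(X)$ is generated by $-K_X$, the classes $K_Y,E_1,\dots,E_n$ form a $\mathbb Z$-basis of $\mathrm{Pic}(Y)$, so the intersection matrix in this basis has determinant $\pm1$. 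Writing $K_Y+G=\pi^*K_X$ and using that $\pi^*K_X$ is orthogonal to all $E_i$, this forces
\[
(K_X^2)=(K_Y^2)+(K_Y.G)=10-\rho(Y)+(K_Y.G)=\frac{1}{r},\qquad r=\bigl|\det\,(E_i.E_j)\bigr|.
\]
For $\langle 2;2,1;3,1;3,2\rangle$ one computes $(K_Y.G)=\tfrac{5}{9}$ and $r=9$, and then $\rho(Y)$ is not an integer; so this case cannot occur.

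The same numerical argument is also what you need in the $\langle 2;2;3;5\rangle$ case, which you skipped over. Recall $\langle b;2;3;5\rangle$ is shorthand for $\langle b;2,r;3,s;5,t\rangle$ with unspecified $r,s,t$; Proposition~\ref{prop:3 types of Sing(X)} only gives $b=2$. Running the displayed identity over the finitely many possibilities is what forces the specific type $\langle 2;2,1;3,1;5,1\rangle$ and $\rho(Y)=13$ (hence exactly one extra $E_8$ point). Only after this does the paper determine, by computing the integer discrepancies $a_i$, that $\tilde D$ meets the central curve and that contracting $\tilde D,E_1,E_2,E_3$ and the $E_8$ tree lands on an $S(E_8)$ with cuspidal $\Gamma$, i.e.\ $X\cong X(\bar Y,\Gamma;q_1,\dots,q_4)$. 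Your plan to ``run essentially the same argument'' as Lemma~\ref{lem:description of cyclic case} does not supply either of these two pieces of numerics, and without them the non-cyclic cases are neither excluded nor pinned down.
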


\begin{proof}
If $X$ is Gorenstein then it is an $S(E_{8})$, so we may assume
that $X$ is not Gorenstein. Let $p$ be its unique non Du Val singular
point, by Proposition \ref{prop:3 types of Sing(X)}, there are 3
possibilities for the singularity of $(X,p)$, and we analyse them
one by one:
\begin{enumerate}
\item $(X,p)$ is a cyclic singularity. Let $Y_0$ be as in Lemma \ref{lem:description of cyclic case}. By Lemma \ref{lem:description of cyclic case}, there exists a birational morphism $f:Y_0\rightarrow \bar{Y}$ where $\bar{Y}$ is an $S(E_8)$ such that $f$ contracts all but one component of $\tilde{D}+\Delta$ to a smooth point (we use the same letters for strict transforms of $\tilde{D}$ and $\Delta$ on $Y_0$). By Proposition \ref{prop:3 types of Sing(X)}, $K_{Y_0}+\tilde{D}+\Delta\sim0$, thus the dual graph of $\tilde{D}+\Delta$ is a loop. It follows that $\Gamma=f(\tilde{D}+\Delta)\sim -K_{\bar{Y}}$ is a rational curve with a double point $q$. In addition, $q$ is a cusp if and only if $\tilde{D}+\Delta$ consists of two rational curves that are tangent to each other or three rational curves that intersect at the same point. In particular, $\tilde{D}+\Delta$ has at most three components when $q$ is a cusp. Since $f$ is a composition of blowing down of $(-1)$-curves, we recover $Y_0$ as a successive blowup from $\bar{Y}$ of nodes on the images of $\tilde{D}+\Delta$. Let $q_1,\cdots,q_m$ be the centers of these blowups. Clearly $q_1=q$ and if $q$ is a cusp then $m\le2$. As $\tilde{D}$ is the only $(-1)$-curve among the components of $\tilde{D}+\Delta$, each $q_i$ is infinitely near $q_{i-1}$. It is now easy to see that $X$ is a $X(\bar{Y},\Gamma;q_{1},\cdots,q_{m})$ with $\Gamma$ nodal or $\Gamma$ cuspidal and $m\le2$.

\item $(X,p)$ has type $\left\langle 2;2,1;3,1;3,2\right\rangle $. By
assumption $H^{2}(Y,\mathrm{Z})=\mathrm{Pic}(Y)$ is freely generated by
$K_{Y}$ and the components in $E$. Since the intersection paring
on $H^{2}(Y,\mathbb{Z})$ is unimodular, the intersection matrix of
$K_{Y}$ and $E$ has determinant $\pm1$. Write $K_{Y}+G=\pi^{*}K_{X}$
where $G$ is supported on $E$ (and can be easily computed from the
given singularity type). As $\pi^*K_X$ is the orthogonal projection of $K_Y$ to the span of the components of $E$, we must then have $(K_X^2)=\left((K_{Y}+G)^{2}\right)=(K_{Y}^{2})+(K_{Y}.G)=10-\rho+(K_{Y}.G)=\frac{1}{r}$
where $r=|\det\left((E_{i}.E_{j})\right)|$ and $\rho$ is the Picard
number of $Y$. It is straightforward to compute that $G=\frac{5}{9}E_1+\cdots$ where $E_1$ is only component of $E$ with self-intersection $(-3)$ (and this is the only component whose coefficient is relevant to us), $(K_{Y}.G)=\frac{5}{9}$
and $r=9$. But $\rho$ is an integer, so this case cannot occur.

\item $(X,p)$ has type $\left\langle 2;2;3;5\right\rangle $. A similar
computation as in case (2) shows that in order to have $10-\rho+(K_{Y}.G)=\frac{1}{r}$, $(X,p)$ must has type $\left\langle 2;2,1;3,1;5,1\right\rangle $ and $\rho=13$. Since the other singularities of $X$ are of $E_8$-type, $X$ has exactly one $E_{8}$-singularity. By the same proof as the proof of Lemma
\ref{lem:D is (-1)-curve}, $\tilde{D}$ is a $(-1)$-curve. Let $E_1$ be the central curve of $\Delta$ and $E_2$, $E_3$, $E_5$ the other three components of $\Delta$ with self-intersections $-2$, $-3$ and $-5$ respectively. Write $K_Y+\tilde{D}+\sum a_iE_i=\pi^*(K_X+D)\sim0$
as before. We have $a_1\ge2$ since otherwise the LHS has positive intersection with $E_1$. By Lemma \ref{lem:K_X+D}, $\tilde{D}+\Delta$ is a rational tree, thus $D$ intersects transversally with exactly one component of $\Delta$. It is straightforward to find the discrepancies $a_i$ once we know which component $\tilde{D}$ intersects. But as $a_i$'s are integers, we find that $\tilde{D}$ intersects $E_1$ by enumerating all the possibilities and that $a_2=a_3=a_5=1$. Now as in Lemma \ref{lem:description of cyclic case} we may contract $\tilde{D}$, $E_1$, $E_2$, $E_3$ and all components of $E\backslash\Delta$ from $Y$ to obtain $\bar{Y}$, which is an $S(E_8)$, such that the image of $E_5$ is a cuspidal rational curve $\Gamma\sim -K_{\bar{Y}}$. Reversing this blowing down procedure we see that $X$ is isomorphic
to some $X(\bar{Y},\Gamma;q_{1},\cdots,q_{4})$ where $\bar{Y}$ is
an $S(E_{8})$ and $\Gamma$ is cuspidal.
\end{enumerate}
\end{proof}

It is well known that $E_{8}$ is the only surface quotient singularity
that does not admit a smooth curve germ \cite{SmoothArc}. Hence the
following corollary follows immediately from the above theorem.

\begin{cor} \label{cor:noE8}
Let $X$ be a surface with only quotient singularities. Assume
that
\begin{enumerate}
\item $K_{X}$ is not pseudo-effective;
\item For every closed point $p\in X$, there is a smooth curve (locally
analytically) passing through $p$.
\end{enumerate}
Then $X$ contains at least one smooth rational curve.
\end{cor}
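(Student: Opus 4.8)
The plan is to argue by contradiction and feed $X$ into the classification of Theorem \ref{thm:Classification}. Suppose $X$ contains no smooth rational curve. Quotient singularities are klt, hence rational, so $X$ has rational singularities and is $\bQ$-factorial; combined with the hypothesis that $K_X$ is not pseudo-effective, Lemma \ref{lem:reduction} and the remark following it show that $-K_X$ is ample and $X$ is a rational surface with $\rho(X)=1$. In particular the hypotheses of Proposition \ref{prop:criterion for no P1} are met, and since we are assuming condition (1) of that proposition, $X$ also satisfies condition (2). Thus $X$ is exactly of the type handled by Theorem \ref{thm:Classification}.

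Next I would invoke Theorem \ref{thm:Classification} to conclude that $X$ is either an $S(E_8)$ or one of the surfaces $X(\bar Y,\Gamma;q_1,\dots,q_m)$ constructed before that theorem. The key point is that \emph{every} surface on this list carries an $E_8$ singularity: an $S(E_8)$ has one by definition, and each $X(\bar Y,\Gamma;q_1,\dots,q_m)$ is obtained from an $S(E_8)$ by blowing up points lying on (strict transforms of) the anticanonical curve $\Gamma\subset\bar Y^0$, which is disjoint from $\mathrm{Sing}(\bar Y)$, and then contracting curves supported on $\tilde D+E$; none of these operations affects the $E_8$ point of $\bar Y$, which therefore persists on $X$. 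This is in fact already recorded in the discussion preceding Theorem \ref{thm:Classification}, where it is noted that $X(\bar Y,\Gamma;q_1,\dots,q_m)$ has two singular points, one of which is an $E_8$ singularity.

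Finally, by \cite{SmoothArc} the $E_8$ singularity is the unique surface quotient singularity admitting no smooth curve germ (equivalently, the only one whose fundamental cycle has no reduced component). Hence at the $E_8$ point $p_0\in X$ there is no smooth curve passing through $p_0$ even locally analytically, contradicting hypothesis (2). Therefore $X$ must contain at least one smooth rational curve. The only step requiring any care — and it is not a genuine obstacle, since it is built into the statement of Theorem \ref{thm:Classification} and the construction preceding it — is the bookkeeping verifying that each surface in the classification really does carry an $E_8$ singularity; everything else is a formal chain of the results established above.
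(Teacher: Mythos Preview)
Your proof is correct and follows exactly the paper's own argument: argue by contradiction, apply Theorem~\ref{thm:Classification} to see that $X$ must carry an $E_8$ singularity, and then invoke \cite{SmoothArc} to contradict hypothesis~(2). The only difference is that you spell out a few intermediate steps (the appeal to Lemma~\ref{lem:reduction} is in fact unnecessary, since the hypotheses of Proposition~\ref{prop:criterion for no P1} are satisfied directly), whereas the paper states the corollary as an immediate consequence of the classification.
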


\section{Examples and questions}

If $X$ is a log del Pezzo surface, a curve of minimal degree on
$X$ seems to be a natural candidate for the smooth rational curve
(such a curve is used extensively in the study of log del Pezzo surfaces).
However, the following example shows that this is not always the case,
even if $X$ is known to contain some smooth rational curve.

\begin{example}
Let $Y\neq S(E_{8})$ be a Gorenstein log del Pezzo surface of degree
1 such that the linear system $|-K_{Y}|$ contains a nodal curve $D$.
Let $\bar{Y}\rightarrow Y$ be the blow up of the node of $D$. Let
$E$ be the exceptional curve and $\bar{D}$ the strict transform
of $D$. Contract the $(-3)$-curve $\bar{D}$ to get our surface
$X$. It is straightforward to verify that the image of $E$ under
the contraction is the only curve of minimal degree on $X$. But since
$E$ intersects $\bar{D}$ at two points, its image on $X$ is not
smooth. In fact the smooth rational curves on $X$ are usually given
by the strict transform of $(-1)$-curves on the minimal resolution
of $Y$. Observe that as $K_{X}+E\sim0$, we have that $K_{X}+C$ is ample
for any smooth rational curve $C$ on $X$.
\end{example}

We are also interested in whether the smooth rational curve $C$ we
find supports a tiger of the log del Pezzo surface $X$ (i.e. there exists $D\sim_\bQ -K_X$ such that $\mathrm{Supp}(D)=C$ and $(X,D)$ is not klt. See \cite[Definition 1.13]{Keel-McKernan}). At least when $C$ passes
through at most one singular point we have a positive answer:

\begin{lem}
Let $C$ be a smooth rational curve on a rank 1 log del Pezzo surface
$X$. Assume that $C$ passes through at most one singular point of
$X$. If $\alpha\in\mathbb{Q}$ is chosen such that $K_{X}+\alpha C\equiv0$,
then the pair $(X,\alpha C)$ is not klt.
\end{lem}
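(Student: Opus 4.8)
The plan is to pass to the minimal resolution $\pi:Y\to X$ and compare discrepancies, using the hypothesis that $C$ meets at most one singular point to keep the combinatorics under control. Write $\tilde C$ for the strict transform of $C$ on $Y$ and suppose, for contradiction, that $(X,\alpha C)$ is klt. Pulling back the numerical equivalence $K_X+\alpha C\equiv 0$ we get
\[
K_Y+\alpha\tilde C+\sum_i a_i E_i=\pi^*(K_X+\alpha C)\equiv 0,
\]
where the $E_i$ run over the $\pi$-exceptional curves and, by the klt assumption together with $Y$ being the minimal resolution, we have $a_i<1$ and $\alpha<1$ (in fact $a_i\le 0$ away from the one singular point $p\in C$, and $0\le a_i<1$ over $p$). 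First I would dispose of the case where $C$ avoids $\mathrm{Sing}(X)$ entirely: then $\tilde C$ has $\tilde C^2=C^2>0$ since $-K_X$ is ample and $C\equiv -\tfrac1\alpha K_X$, and intersecting the displayed relation with $\tilde C$ gives $K_Y\cdot\tilde C+\alpha\tilde C^2<0$ together with $\tilde C\cong\mathbb P^1$, forcing $\tilde C$ to be a $(-1)$-curve or a conic on a rational surface; one then checks directly (as in the proof of Lemma \ref{lem:D is (-1)-curve}) that this is incompatible with $\alpha<1$, or simply notes $\alpha C\equiv -K_X$ would have to be an integral curve of the type handled by Proposition \ref{prop:criterion for no P1}.

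The main case is when $C$ passes through a single singular point $p$, with $\Delta=\pi^{-1}(p)_{\mathrm{red}}=\bigcup E_i$ (all other exceptional components have $a_i=0$ and can be ignored). Here I would intersect the relation $K_Y+\alpha\tilde C+\Delta_a\equiv 0$, where $\Delta_a=\sum a_iE_i$, with $\tilde C$ to get
\[
2p_a(\tilde C)-2+\alpha\tilde C^2 = (K_Y+\tilde C)\cdot\tilde C = -(1-\alpha)\tilde C\cdot\tilde C - \Delta_a\cdot\tilde C \le 0,
\]
so in particular $\tilde C\cong\mathbb P^1$ and $\Delta_a\cdot\tilde C\ge 0$; combined with $\tilde C\cdot\Delta\ge 1$ (since $C$ does pass through $p$) and $-K_X$ ample, one extracts numerical constraints on $\tilde C^2$ and on the coefficients $a_i$. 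The key tool is that $(Y,\alpha\tilde C+\Delta_a)$ is sub-klt over $p$ with $K_Y+\alpha\tilde C+\Delta_a\equiv_\pi 0$, so I can invoke \cite[Proposition 5.58]{Kollar-Mori} (exactly as in Lemma \ref{lem:only one klt}) to pin down the dual graph of $\tilde C+\Delta$ near $p$ — it must be a chain or a low-valence configuration, which contradicts the numerology coming from the self-intersections dictated by the quotient-singularity classification. The cleanest route may instead be to observe that if $(X,\alpha C)$ were klt then, since $-K_X$ is ample and $\alpha C\equiv -K_X$, small perturbation shows $X$ would be log del Pezzo with a boundary supported on $C$; running a $K$-negative step as in Lemma \ref{lem:reduction} produces either a $\mathbb P^1$-fibration or contracts a $(-1)$-curve disjoint from the relevant locus, and tracing $C$ through it yields a new smooth rational curve or forces $X$ into the excluded list.

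The hard part will be the bookkeeping in the single-singular-point case: the coefficient $\alpha$ is determined by the intersection theory of the specific quotient singularity, and one must verify in each of the finitely many singularity types from Brieskorn's list that the resulting $a_i$ cannot all stay $<1$ while keeping $\tilde C\cdot\Delta\ge 1$ and $K_Y+\alpha\tilde C+\Delta_a\equiv_\pi 0$. I expect this to reduce, via the negativity lemma and \cite[Proposition 5.58]{Kollar-Mori}, to showing that the only way $\tilde C+\Delta$ can support a sub-log-canonical center structure with the right discrepancies is when $\tilde C$ attaches to $\Delta$ at a point whose local contribution already pushes some $a_i$ up to or past $1$ — precisely the mechanism seen in Proposition \ref{prop:3 types of Sing(X)}. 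Once that local statement is in hand the global contradiction is immediate, since $a_i\ge 1$ violates klt of $(X,\alpha C)$.
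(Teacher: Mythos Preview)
Your proposal takes a far more complicated route than necessary, and in its current form the main case has a genuine gap: you never explain \emph{why} the type-by-type check against Brieskorn's list must force some $a_i\ge 1$, and the appeal to \cite[Proposition 5.58]{Kollar-Mori} does not obviously produce such a bound. The paper's argument, by contrast, is a three-line computation that uses no classification at all.

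For the case $C\subset X^0$ your idea is right in spirit but the write-up is muddled (you assert $\tilde C^2>0$ and then that $\tilde C$ is forced to be a $(-1)$-curve). The clean version is just adjunction: $(K_X+C)\cdot C=-2$ while $(K_X+\alpha C)\cdot C=0$, so $(\alpha-1)C^2=2$; since $\rho(X)=1$ and $-K_X$ is ample, $C$ is ample and $C^2>0$, hence $\alpha>1$, which already violates klt.

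For the main case the paper replaces $\alpha$ by $\beta=\lct(X,C)$ and shows directly that $(K_X+\beta C)\cdot C<0$; since $(K_X+\alpha C)\cdot C=0$ and $C^2>0$ this gives $\alpha>\beta$ and we are done. The computation is: write $\pi^*(K_X+\beta C)=K_{\tilde X}+\beta\tilde C+\sum a_iE_i$ on the minimal resolution, so that $a_i\le 1$ by definition of $\beta$. Because $C$ is smooth and meets $\mathrm{Sing}(X)$ only at $p$, the strict transform $\tilde C$ meets exactly one exceptional curve, transversally; thus $\sum(E_i\cdot\tilde C)=1$. Using $(\tilde C^2)\ge -1$ from \cite[Lemma 1.3]{Zhang-klt} (here is where $\rho(X)=1$ enters) and $(K_{\tilde X}+\tilde C)\cdot\tilde C=-2$, one gets
\[
(K_X+\beta C)\cdot C=(K_{\tilde X}+\beta\tilde C+\textstyle\sum a_iE_i)\cdot\tilde C\le -1-\beta+\textstyle\sum(E_i\cdot\tilde C)\le -\beta<0.
\]
That is the whole proof. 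The hypothesis that $C$ passes through at most one singular point is used only to get $\sum(E_i\cdot\tilde C)\le 1$; no dual-graph analysis, no Brieskorn list, and no MMP step is needed. Your plan would, at best, reprove this inequality singularity-by-singularity, and at worst get stuck because \cite[Proposition 5.58]{Kollar-Mori} constrains the \emph{lc} locus, not the strict inequality you need for klt.
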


\begin{proof}
If $C$ lies in the smooth locus of $X$ then by adjunction $(K_{X}+C.C)=-2<0$,
hence $\alpha>1$ and the result is clear. Otherwise we may assume
$C\cap\mathrm{Sing}(X)=\{p\}$. Let $\beta$ be the log canonical
threshold of the pair $(X,C)$ and $\pi:\tilde{X}\rightarrow X$ the
minimal resolution. It suffices to show that $(K_{X}+\beta C.C)\le0$.
As $C$ is a smooth rational curve, $\pi$ is also a log resolution
of $(X,C)$. Write $\pi^{*}(K_{X}+\beta C)=K_{\tilde{X}}+\beta\tilde{C}+\sum a_{i}E_{i}$
where the $E_{i}$'s are the exceptional curves of $\pi$. We have
$a_{i}\le1$ by the choice of $\beta$ and $\tilde{C}$ only intersects
one $E_{i}$. Now since $X$ is of rank 1 we have $(\tilde{C}^{2})\ge-1$
by \cite[Lemma 1.3]{Zhang-klt} and $(K_{\tilde{X}}+\tilde{C}.\tilde{C})=-2$
by adjunction, thus
\[
(K_{X}+\beta C.C)=(K_{\tilde{X}}+\beta\tilde{C}+\sum a_{i}E_{i}.C)\le-1-\beta+\sum(E_{i}.\tilde{C})\le-\beta<0
\]
\end{proof}

On the other hand, once $C$ passes through more singular points of
$X$, the situation becomes more complicated. The following example
suggests that even if $X$ has a tiger, there is in general no guarantee
that the tiger can be supported on $C$.

\begin{example}
Similar to the previous example, let $Y$ be a Gorenstein log del
Pezzo surface of degree 1 with an $A_{8}$-singularity and $D\in|-K_{Y}|$
a nodal curve. Blow up the node and one of its infinitely near points
to get a new surface $\bar{Y}$ and let $X$ be the contraction of
the strict transform of $D$ and the first exceptional curve. The
second singularity of $X$ has dual graph of type $\left\langle 4,2\right\rangle $.
Every smooth rational curve on $X$ is a $(-1)$-curve on the minimal
resolution and intersects both singular points of $X$. By direct
computation we have $\beta=\frac{1}{2}$ (where $\beta=\mathrm{lct}(X,C)$
as in the proof of the above lemma) and $(K_{X}+\beta C.C)>0$, hence
by the same reasoning for the above lemma we know that $C$ does not
support a tiger. However, $-K_{X}$ is effective so $X$ does have
a tiger.
\end{example}

In view of Proposition \ref{prop:criterion for no P1}, we may ask for a similar classification of surfaces with rational singularities that do not contain smooth rational curves. The next example shows that we do get additional cases.

\begin{example}
The construction is similar to that of $X(\bar{Y},\Gamma;q_{1},\cdots,q_{m})$. Let $\bar{Y}$ be an $S(E_{8})$ with $\Gamma\in|-K_{\bar{Y}}|$ a cuspidal rational curve and let $q$ be the cusp of $\Gamma$. Let $Y\rightarrow\bar{Y}$ be the blowup at $q_{1}=q,$ $q_{2}$,
$\cdots$ , $q_{m}$ ($m\ge5$) where each $q_{i}$ is infinitely near $q_{i-1}$
($i>1$) such that $q_{i}$ lies on the strict transform of $\Gamma$ for $i<m$ while $q_m$ is away from $\Gamma$ and the previous exceptional curves. Let $E_i$ be the strict transform of the exceptional curve coming from the blowup of $q_i$. The dual graph of $\Gamma$ and $E_i$ ($i=1,\cdots,m-1$) is given as follows:
\begin{center}
\begin{tikzpicture}
\node (0) [anchor = west] {$(-2)$};
\node (1) [right = 0.25cm of 0, anchor = west] {$(-2)-\cdots-(-2)-(-m-1)$};
\node (2) [above left = 0.5cm of 0, anchor = east] { $(-2)$};
\node (3) [below left = 0.5cm of 0, anchor = east] { $(-3)$};
\path
    (0) edge (1)
    (2) edge (0)
    (3) edge (0);
\end{tikzpicture}
\end{center}
We define $X$ to be the contraction from $Y$ of these curves. It has two singular points, one of which is an $E_{8}$ singularity and the other is not a quotient singularity since we assume $m\ge5$. Nevertheless, it is a rational singularity (one way to see this is to attach $m$ auxiliary $(-1)$-curves to $\Gamma$ and notice that the corresponding configuration of curves contracts to a smooth point, hence any subset of these curves also contracts to a rational singularity by \cite[Proposition 1]{artin}). We also have $-K_Y\sim \Gamma+E_1+E_2+2\sum_{i=3}^{m-1}E_i+E_m$ by induction on $m$ and it follows as before that $\mathrm{Cl}(X)$ is generated by the image of $E_m$ which is linearly equivalent to $-K_X$. By Proposition \ref{prop:criterion for no P1}, $X$ is a surface with rational singularities that doesn't contain any smooth rational curves.
\end{example}

We observe that the surfaces in the above example still contain $E_8$ singularities and thus violate the second assumption of Corollary \ref{cor:noE8}. In addition the above construction does not seem to have many variants. It is therefore natural to ask the following question:

\begin{que}
Let $X$ be a surface with rational singularities. Assume that $K_X$ is not pseudo-effective and every closed point of $X$ admits a smooth curve germ. Is it true that $X$ contains a smooth rational curve? More aggressively, classify all surfaces with rational singularities that do not contain smooth rational curves.
\end{que}

Finally we investigate what happens if we remove the assumption on
$K_{X}$ in our main theorem. Clearly there are many smooth surfaces (e.g. abelian surfaces, ball quotients, etc.) with nef canonical divisors that do not even contain rational curves. Since we are mostly interested in the existence of \emph{smooth} rational curves, we restrict ourselves to rational surfaces. We will construct some examples of rational surfaces with cyclic quotient singularities that do not contain smooth rational curves. These rational surfaces $X$ will be the quotient of certain singular K3 surfaces and satisfy $K_{X}\sim_\bQ 0$, hence the
assumption (1) in our main theorem is necessary.

\begin{example} \label{ex:no P1}
Let $T$ be a smooth del
Pezzo surface of degree 1. For general choice of $T$, the linear
system $|-K_{T}|$ contains at least two nodal rational curves $C_{i}$
($i=1,2$). Let $Q_{i}$ be the node of $C_{i}$ and $P=C_{1}\cap C_{2}$.
Let $\pi:Y\rightarrow T$ be the blowup of both $Q_{i}$ with exceptional divisors $E_i$ and let $\tilde{C}_{i}$ be the strict transform of $C_{i}$ on $Y$. Then $K_Y=\pi^*K_T+E_1+E_2$ and $\tilde{C}_i=\pi^*C_i-2E_i=\pi^*(-K_T)-2E_i$, thus $-2K_Y\sim \tilde{C}_1+\tilde{C}_2$. We also have $(\tilde{C}_{i}^{2})=-3$ and $(\tilde{C}_{1}.\tilde{C}_{2})=1$, hence we can contract both
$\tilde{C}_{i}$ simutaneously to get a rational surface $X$ with
a cyclic singularity $p$ of type $\frac{1}{8}(1,3)$. The next three lemmas tell us that
for very general choice of $T$ and $C_{i}$, such $X$ does not contain
any smooth rational curves.
\end{example}

\begin{lem}
\label{lem:example_curve at p}Every smooth curve on $X$ is away
from $p$.
\end{lem}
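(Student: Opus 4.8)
The plan is to exploit the explicit geometry of the configuration on $Y$ together with the constraint imposed by $K_X \sim_{\bQ} 0$. A smooth curve $C$ on $X$ passing through $p$ lifts to a curve $\widetilde C$ on the minimal resolution $Y$ meeting the exceptional chain of the $\frac{1}{8}(1,3)$ singularity, which is the chain $\widetilde C_1 - \widetilde C_2$ with self-intersections $(-3,-3)$ (after relabelling, the $\frac{1}{8}(1,3)$ resolution graph has two $(-3)$-curves? — in fact one should double-check the continued fraction $8/3 = [3,3]$, which is exactly our chain). Since $C$ is smooth at $p$, its strict transform $\widetilde C$ meets the exceptional chain transversally at a single point lying on one end of the chain, i.e. $(\widetilde C . \widetilde C_1) + (\widetilde C . \widetilde C_2) = 1$, with the nonzero intersection equal to $1$ against exactly one of the two end curves.

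First I would compute the numerical class of $\widetilde C$ on $Y$. Writing $\widetilde C = \pi^*(aH) - \sum b_j F_j - c_1 E_1 - c_2 E_2$ in terms of a basis of $\mathrm{Pic}(T)$ pulled back plus the two exceptional divisors $E_i$, I would use: (i) $C \sim_{\bQ}$ some multiple of $-K_X$, which on $Y$ translates (via $-2K_Y \sim \widetilde C_1 + \widetilde C_2$ and the fact that $\pi^* K_X$ is the orthogonal projection of $K_Y$ onto the complement of the span of the $\widetilde C_i$) into $\widetilde C$ being proportional, modulo the span of $\widetilde C_1,\widetilde C_2$, to $K_Y$; (ii) the transversality condition $(\widetilde C.\widetilde C_1) + (\widetilde C.\widetilde C_2) = 1$; (iii) $C$ being a smooth \emph{rational} curve would force, by Corollary \ref{cor:when a curve is P1} applied on $X$ (which has rational singularities), that $|K_X + C| = \emptyset$, contradicting $K_X + C \sim_{\bQ}$ an effective divisor once $C$ is a positive multiple of $-K_X$ — but actually the cleaner route is just: since $X$ satisfies (2) of Proposition \ref{prop:criterion for no P1} (once we know $K_X$ not pseudo-effective — wait, here $K_X \sim_{\bQ} 0$), we cannot directly invoke that. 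So instead I would argue directly on $Y$: the smooth rational curve $C$ on $X$ pulls back to a smooth rational curve $\widetilde C$ on $Y$, and its image $D := \pi(\widetilde C_1 \cup \widetilde C_2 \cup \widetilde C)$-contraction configuration must be a rational tree by Lemma \ref{lem:K_X+D}, since $|K_Y + \widetilde C + \widetilde C_1 + \widetilde C_2| = |{-}\text{(something effective)}|$ — I would verify $K_Y + \widetilde C + \widetilde C_1 + \widetilde C_2$ is anti-effective by computing its class from $-2K_Y \sim \widetilde C_1 + \widetilde C_2$, getting $K_Y + \widetilde C_1 + \widetilde C_2 \sim -K_Y$ which is effective and nonzero — hence this approach needs care and the genus/adjunction bookkeeping on $Y$ is where the real work lies.

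The cleanest argument I would actually pursue: contract $\widetilde C$ last. Since $\widetilde C$ is a smooth rational curve meeting the chain $\widetilde C_1 + \widetilde C_2$ at one transversal point on an end, and $\widetilde C_1 + \widetilde C_2 + \widetilde C$ forms a chain of three rational curves with self-intersections $(-3,-3,\widetilde C^2)$, I would apply adjunction on $Y$: from $K_Y + \widetilde C . \widetilde C = -2$ and $-2K_Y \sim \widetilde C_1 + \widetilde C_2$ I can solve for $(K_Y.\widetilde C)$ and $(\widetilde C^2)$ in terms of $(\widetilde C.\widetilde C_1)$, $(\widetilde C.\widetilde C_2)$. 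Combined with $(\widetilde C . \widetilde C_i) \ge 0$ summing to $1$ and the fact that $\widetilde C$ is not one of the $\widetilde C_i$, I expect to land on $(\widetilde C^2) = -1$ and $(\widetilde C.K_Y) = -1$, i.e. $\widetilde C$ is a $(-1)$-curve on $Y$ meeting one of the $\widetilde C_i$ transversally once and disjoint from the other. Then $\pi(\widetilde C)$ contracting the chain would give $C$ passing through $p$, consistent — so to derive a contradiction I must instead contract $\widetilde C$ on $Y$ first and push the del Pezzo surface $T$ off to a \emph{different} surface, obtaining on $T$ a $(-1)$-curve (or nodal cubic configuration) that for \emph{very general} $T$ and $C_i$ simply does not exist. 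This genericity input — that for very general $(T, C_1, C_2)$ there is no such extra curve — is the main obstacle, and I would handle it by a dimension count: the Picard lattice of $Y$ is fixed, the class of $\widetilde C$ is one of finitely many candidate classes (bounded self-intersection and intersection with $-K_Y$), and for each such class the locus of $(T, C_i)$ admitting an effective representative is a proper closed subset of the moduli, so a very general choice avoids all of them.

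The main obstacle, then, is twofold: (a) pinning down the finite list of possible numerical classes of $\widetilde C$ — this uses the Hodge index theorem on $Y$ together with $\widetilde C^2 \ge -1$ (as $X$ has rank $1$, by \cite[Lemma 1.3]{Zhang-klt}) and the adjunction-derived value of $(\widetilde C.K_Y)$; and (b) verifying that the very general member of the relevant family of del Pezzo surfaces of degree $1$ with two distinguished nodal anticanonical curves has no such unexpected curve — equivalently, that the extra $(-2)$-classes or effective $(-1)$-classes orthogonal to the required sublattice are not represented by effective divisors for very general moduli. I expect (b) to be straightforward once phrased as "an effective class has nonnegative intersection with an ample divisor and the family is irreducible", but (a) requires the patient case analysis of which end of the chain $\widetilde C$ meets and extracting integrality of the discrepancies — this is the routine-but-essential computation I would defer to the proof proper.
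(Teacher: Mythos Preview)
You have all the ingredients of the paper's proof in your first paragraph and then walk right past them. You correctly note that if $C$ is smooth through $p$ then its strict transform $\widetilde C$ meets the exceptional chain $\widetilde C_1 + \widetilde C_2$ transversally in a single point, so $(\widetilde C.\widetilde C_1 + \widetilde C_2)=1$; and you also correctly recall $-2K_Y \sim \widetilde C_1 + \widetilde C_2$. The paper simply combines these: $(\widetilde C.\widetilde C_1 + \widetilde C_2) = -2(K_Y.\widetilde C)$ is even, contradicting the value $1$. That is the entire proof.

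Your proposed route, by contrast, has two genuine gaps. First, the lemma is about \emph{every} smooth curve, not only smooth rational curves; once you start invoking adjunction to force $(\widetilde C^2)=-1$ and $(K_Y.\widetilde C)=-1$, you are tacitly assuming $C$ is rational, so you would at best prove a weaker statement. Second, your endgame appeals to a ``very general'' choice of $(T,C_1,C_2)$ to rule out the putative extra $(-1)$-curve, but the lemma as stated carries no genericity hypothesis---it holds for \emph{any} $T$ and $C_i$ as in the construction. The genericity is only invoked later, in Lemma~\ref{lem:example_picard number} and the lemma after it, to handle smooth rational curves lying in $X\setminus\{p\}$. So even if your dimension count went through, you would have proved something strictly weaker than what is claimed.
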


\begin{proof}
Let $p\in C$ be a smooth curve on $X$ and $\tilde{C}$ its strict
transform on $Y$, then $(\tilde{C}.\tilde{C}_{1}+\tilde{C}_{2})=1$.
But we have $\tilde{C}_{1}+\tilde{C}_{2}=-2K_{Y}$, so the intersection
must be even, a contradiction.
\end{proof}

Let $Y'\rightarrow Y$ be the blowup of $P$ and $C_{i}'$ the strict
transform of $\tilde{C}_{i}$, then $C_{1}'+C_{2}'=-2K_{Y'}$, hence
we can take the double cover $f:S\rightarrow Y'$ ramified along $C_{1}'+C_{2}'$. 
$S$ is smooth as $C_{1}'$ and $C_{2}'$ are smooth and disjoint.
$S$ is indeed a K3 surface as $K_{S}=f^{*}K_{Y'}(\frac{1}{2}(C_{1}'+C_{2}'))\sim 0$
and $H^{1}(S,\mathcal{O}_{S})=H^{1}(Y',\mathcal{O}_{Y'})\oplus H^{1}(Y',\mathcal{O}_{Y'}(K_{Y'}))=0$.

\begin{lem}
\label{lem:example_picard number}For very general choice of $T$ and $C_{i}$,
the above K3 surface $S$ has Picard number 12.
\end{lem}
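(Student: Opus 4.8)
The plan is to exhibit an explicit rank-$12$ sublattice of $\mathrm{NS}(S)$ coming from the geometry of the construction, and then argue that for a very general choice of $(T,C_1,C_2)$ there is nothing more. First I would count the ``obvious'' classes. The surface $Y'$ is obtained from the del Pezzo surface $T$ of degree $1$ (which has $\rho(T)=9$) by blowing up the two nodes $Q_1,Q_2$ and the point $P$, so $\rho(Y')=12$; the double cover $f:S\to Y'$ is branched along a smooth divisor, so $f^*:\mathrm{NS}(Y')\otimes\mathbb Q\hookrightarrow\mathrm{NS}(S)\otimes\mathbb Q$ is injective and hence $\rho(S)\ge 12$. (One should note here that $f^*$ being rationally injective is immediate since $f_*f^*=\cdot 2$ on cohomology; one does not even need the branch locus to be irreducible.) So the content of the lemma is the reverse inequality $\rho(S)\le 12$ for very general parameters.

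The standard tool for this is a countability/specialization argument. The data $(T,C_1,C_2)$ — equivalently the choice of a degree-$1$ del Pezzo together with two nodal members of $|-K_T|$ meeting suitably — varies in an irreducible (or at worst finite-type) moduli space $\mathcal M$ over $\mathbb C$, and the construction produces a smooth family of K3 surfaces $\mathcal S\to\mathcal M$. I would invoke the fact (a consequence of the Noether–Lefschetz-type theorem for K3 surfaces, or more elementarily of the fact that $H^2(S,\mathbb Z)$ carries a variation of Hodge structure and $\mathrm{NS}$ is the locus where classes stay of type $(1,1)$) that for each fixed class $\gamma\in H^2(S_0,\mathbb Z)$ of a reference fibre $S_0$, the locus in $\mathcal M$ where the flat transport of $\gamma$ is algebraic is a countable union of proper closed subvarieties, unless $\gamma$ is already algebraic on every fibre. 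Thus it suffices to produce a single fibre $S_0$ in the family with $\rho(S_0)=12$: then outside a countable union of proper closed subsets — i.e. for very general $(T,C_1,C_2)$ — the Picard number cannot jump, so $\rho(S)\le\rho(S_0)=12$, and combined with the previous paragraph $\rho(S)=12$.

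Producing the witness fibre $S_0$ is where the real work lies, and I expect this to be the main obstacle. One clean approach: the Picard number of $S_0$ is $\le 12$ as soon as the transcendental lattice $T(S_0)$ has rank $\ge 22-12=10$, which by general principles holds when $S_0$ is \emph{not} too special; concretely it is enough to exhibit one member of the family whose transcendental lattice is as large as possible. A convenient way to certify this is reduction modulo a prime: by the theory of good reduction and the fact that the geometric Picard number can only go up under specialization (van Luijk's method), one chooses an arithmetic model of the family over a number ring, reduces at two primes $p$ and $q$ of good reduction, bounds $\rho(S_0\bmod p)$ and $\rho(S_0\bmod q)$ using the Weil conjectures (counting points on $S_0$ over small finite fields and the Artin–Tate / Lefschetz trace formula), and — if the two bounds are $12$ and $13$, or if in both cases the bound is $12$ but the discriminants of the Néron–Severi lattices obtained are non-square multiples of each other — concludes $\rho(S_0)_{\overline{\mathbb Q}}=12$ in characteristic zero. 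Alternatively, if the $12$ explicit classes already generate a primitive sublattice of $\mathrm{NS}$ of the right discriminant for some degenerate but still smooth choice (e.g. a $T$ with extra automorphisms), one can sometimes see $\rho=12$ directly. I would carry out the explicit class-count first (it is forced by the blow-up/double-cover bookkeeping and gives $\ge 12$), then set up the family and the countable-union statement, and only at the end invest in the point-counting argument that pins down a single fibre with $\rho=12$; that last step is the one requiring genuine computation rather than formal nonsense.
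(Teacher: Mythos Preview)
Your lower bound $\rho(S)\ge 12$ via $f^*$ is exactly what the paper does. For the upper bound, however, the paper takes a much shorter route than the one you outline. Rather than searching for a single witness fibre with $\rho=12$ (and then certifying it by reduction mod $p$ and point counting \`a la van Luijk), the paper simply counts dimensions: the moduli of K3 surfaces is $20$-dimensional, and the Noether--Lefschetz locus where $\rho\ge 13$ is a countable union of subvarieties of dimension at most $20-13=7$; on the other hand smooth degree-$1$ del Pezzo surfaces move in an $8$-dimensional family, so the construction yields an $8$-dimensional family of K3 surfaces, which therefore cannot lie inside a $7$-dimensional locus. Hence a very general member has $\rho\le 12$.

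Your approach is not wrong --- producing one fibre with $\rho=12$ and invoking upper semicontinuity of the Picard number in families does give the result --- but it turns a soft argument into a hard one. The arithmetic point-counting step you propose is genuine work and is completely avoidable here; the dimension comparison gives the existence of the desired fibre for free, without ever naming it. Note also that both arguments implicitly use that the classifying map from the parameter space to K3 moduli has $8$-dimensional image (equivalently, is generically finite), which the paper does not spell out; if you wanted to be careful you would check this by an infinitesimal Torelli computation, but that is the only missing ingredient in either version.
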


\begin{proof}
$\rho(S)\ge12$ as it's a double cover of $Y'$ and $\rho(Y')=12$.
Since the moduli space of K3 surfaces is 20-dimensional, the locus
of those with Picard number at least 13 is a countable union of subvarieties
of dimension at most 7. On the other hand, the above construction
gives us an 8-dimensional family of K3 surfaces: we have an 8-dimensional family
of del Pezzo surfaces of degree 1. Hence for very general choice of $T$,
we get a K3 surface $S$ with $\rho(S)=12$.
\end{proof}

It now follows that

\begin{lem}
For very general choice of $C_{i}$, the rational surface $X$ constructed
above does not contain any smooth rational curve.
\end{lem}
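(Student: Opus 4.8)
The plan is to suppose that $X$ contains a smooth rational curve $C$ and to lift it step by step up to the K3 surface $S$ constructed before Lemma~\ref{lem:example_picard number}, where a parity obstruction in the K3 lattice will rule it out as soon as the Picard number of $S$ is as small as possible. First I reduce $C$ to a $(-2)$-curve upstairs. By Lemma~\ref{lem:example_curve at p} the curve $C$ avoids the singular point $p$, and since $\pi\colon Y\to X$ is an isomorphism over $X\setminus\{p\}$, the strict transform $\tilde C\subset Y$ is again a smooth rational curve, now disjoint from $\tilde C_1\cup\tilde C_2$. Because $-2K_Y\sim \tilde C_1+\tilde C_2$, this disjointness forces $(K_Y.\tilde C)=0$, so adjunction on the smooth surface $Y$ gives $(\tilde C^2)=-2-(K_Y.\tilde C)=-2$. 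The point $P$ lies on $\tilde C_1$, hence $\tilde C$ misses $P$ and the blowup $Y'\to Y$ of $P$ is an isomorphism in a neighbourhood of $\tilde C$; let $C''\subset Y'$ be the corresponding $(-2)$-curve. Then $C''$ is disjoint from the branch divisor $C_1'+C_2'$ of the double cover $f\colon S\to Y'$.

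Next I lift $C''$ to $S$. Since $C''$ avoids the branch locus, $f^{-1}(C'')\to C''$ is an étale double cover of $\mathbb{P}^1$, hence—$\mathbb{P}^1$ being simply connected—a disjoint union $D\sqcup\iota(D)$ of two smooth rational curves, where $\iota$ is the covering involution; in particular $f^*[C'']=[D]+[\iota D]$ in $\mathrm{Pic}(S)$, and $(D^2)=-2$ because $S$ is a K3 surface. This is where the very generality hypothesis is used: by Lemma~\ref{lem:example_picard number} we have $\rho(S)=12=\rho(Y')$. Since $f_*\circ f^*$ is multiplication by $\deg f=2$ on rational Picard classes, the pullback $f^*\colon\mathrm{Pic}(Y')_\mathbb{Q}\to\mathrm{Pic}(S)_\mathbb{Q}$ is injective, hence an isomorphism between $\mathbb{Q}$-vector spaces of the same dimension $12$; consequently $\iota^*$ acts as the identity on $\mathrm{Pic}(S)_\mathbb{Q}$. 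Therefore $[D]=[\iota D]$ in $\mathrm{Pic}(S)_\mathbb{Q}$, so $[D]=\tfrac12 f^*[C'']$, and computing the self-intersection two ways yields $-2=(D^2)=\tfrac14(f^*[C''])^2=\tfrac12(C''^2)=-1$, a contradiction. Hence $X$ contains no smooth rational curve, and since by Lemma~\ref{lem:example_picard number} this holds for very general $T$ and $C_i$, the lemma follows.

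The only point I expect to need care is the identity $f_*\circ f^*=2\cdot\mathrm{id}$ on $\mathrm{Pic}_\mathbb{Q}$ (immediate from the projection formula and $\deg f=2$, checking separately a prime divisor lying inside, respectively outside, the branch locus) and its consequence that $\rho(S)=\rho(Y')$ forces $\iota^*$ to be trivial on $\mathrm{Pic}(S)_\mathbb{Q}$; everything else is routine adjunction and intersection theory on the smooth surfaces $Y$, $Y'$, $S$. One should also record explicitly that $Y'\to Y$ is an isomorphism near $\tilde C$—guaranteed by $P\in\tilde C_1$ together with $\tilde C\cap\tilde C_1=\emptyset$—so that $C''$ genuinely is a $(-2)$-curve disjoint from the ramification of $f$.
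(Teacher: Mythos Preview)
Your proof is correct and follows the same route as the paper: lift $C$ to a curve $C''$ on $Y'$ disjoint from the branch locus, split $f^{-1}(C'')$ into two rational curves on the K3 surface $S$, and invoke $\rho(S)=12$ from Lemma~\ref{lem:example_picard number}. The only difference is in the final contradiction: the paper counts ranks, observing that $D_1$, $D_2$, and $f^*$ of the orthogonal complement of $C''$ in $\mathrm{Pic}(Y')$ span a rank-$13$ sublattice of $\mathrm{Pic}(S)$, whereas you first compute $(C''^{2})=-2$ via adjunction and then derive the numerical clash $-2=(D^2)=\tfrac14(f^*C'')^2=-1$. Both arguments encode the same fact, namely that $[D_1]-[D_2]$ is a nonzero $\iota^*$-anti-invariant class and hence cannot lie in $f^*\mathrm{Pic}(Y')_{\mathbb Q}$; your version just makes the intersection numbers explicit. (Minor notational point: in the paper $\pi$ denotes the blowup $Y\to T$, not the contraction $Y\to X$, but your meaning is clear.)
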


\begin{proof}
Suppose $C\subseteq X$ is a smooth rational curve. By Lemma \ref{lem:example_curve at p},
$p\not\in C$, hence its strict transform $C'$ in $Y'$ is disjoint
from $C_{1}'+C_{2}'$. As $f:S\rightarrow Y'$ is \'etale outside
$C_{1}'+C_{2}'$, $f^{-1}(C')$ splits into a disjoint union of two
smooth rational curves $D_{1}$, $D_{2}$. This implies $\rho(S)\ge13$
($D_{1}$, $D_{2}$ and the pullback of the orthogonal complement
of $C'$ in $\mathrm{Pic}(Y')$ generate a sublattice of rank 13),
which can not happen for very general choice of $T$ and $C_{i}$ by
Lemma \ref{lem:example_picard number}.
\end{proof}

By allowing more singular points, we can give a similar construction
with a simpler proof of non-existence of smooth rational curves.

\begin{example}
Instead of taking a smooth del Pezzo surface of degree 1, let $T$
be a Gorenstein rank one log del Pezzo surface of degree 1. Assume
either $T$ has a unique singular point or it has exactly two $A_{n}$-type
singular points, then a similar argument as the proof of \cite[Lemma 3.6]{Keel-McKernan} imlies
that for general choice of $T$, $|-K_{T}|$ contains two nodal rational
curves $C_{i}\,(i=1,2)$ lying inside the smooth locus of $T$. Let
$X$ be the surface obtained by the same construction in Example \ref{ex:no P1}
(i.e. blow up the nodes $Q_{i}$ of $C_{i}$ and contract both $\tilde{C}_{i}$),
then it has the same singularities as $T$ as well as a cyclic singularity
$p$ of type $\frac{1}{8}(1,3)$. Suppose $X$ contains a smooth rational
curve $C$. As before we know that $C\subset U=X\backslash p$, and
as $2K_{X}\sim0$, we have a double cover $g:Y\rightarrow X$ that
is unramified over $U$ (since $K_{X}$ is Cartier over $U$). Since
$C\cong\mathbb{P}^{1}$ is simply connected, we see that $g^{-1}(C)$
consists of two disjoint copies of $\mathbb{P}^{1}$. By construction
$X$ has Picard number one, hence $C$ is ample, thus $g^{*}C$ is
also ample on $Y$, but this contradicts \cite[III.7.9]{hartshorne}. In some cases one can
also derive a contradiction without using the double cover. For example
suppose $T$ has a unique $A_{8}$-type singularity $q$, then modulo
torsion $\mathrm{Cl}(X)$ is generated by $E$, the strict transform
of the exceptional curve over either one of the $Q_{i}$'s, and $(E^{2})=\frac{1}{2}$.
It follows that
\begin{equation}
\deg(K_{C}+\mathrm{Diff}_{C}(0))=(K_{X}+C.C)=(C^{2})\ge\frac{1}{2}\label{eq:deg}
\end{equation}
but $\deg K_{C}=-2$ and as $C$ is smooth at $q$, the dual graph
of $(X,C)$ at $q$ is a fork with $C$ being one of the branches. It
is then straightforward to compute that $\deg\mathrm{Diff}_{C}(0)=(\frac{1}{m}+\frac{1}{n})^{-1}\le\frac{20}{9}$
where $m,n$ are the index of the other two branches of the dual graph
(i.e. one larger than the number of vertices in the branch), which
contradicts (\ref{eq:deg}).
\end{example}

Inspired by these examples, we may expect to take certain quotients of Calabi-Yau varieties and construct higher-dimension rationally connected varieties with klt singularities that do not contain smooth rational curves. Unfortunately we are unable to identify such an example, and therefore leave it as a question:

\begin{que}
Let $X$ be a rationally connected variety of dimension $\ge3$ with klt singularities. Does $X$ always contain a smooth rational curve? 
\end{que}

We remark that if $X$ is indeed log Fano, then a folklore conjecture predicts that the smooth locus of $X$ is rationally connected and thus contains a smooth rational curve since $\dim X\ge 3$.

\bibliography{ref}

\begin{thebibliography}{GSLJ94}

\bibitem[Art66]{artin}
Michael Artin.
\newblock On isolated rational singularities of surfaces.
\newblock {\em Amer. J. Math.}, 88:129--136, 1966.

\bibitem[Bri68]{Brieskorn}
Egbert Brieskorn.
\newblock Rationale {S}ingularit\"aten komplexer {F}l\"achen.
\newblock {\em Invent. Math.}, 4:336--358, 1967/1968.

\bibitem[Dur79]{DuVal}
Alan~H. Durfee.
\newblock Fifteen characterizations of rational double points and simple
  critical points.
\newblock {\em Enseign. Math. (2)}, 25(1-2):131--163, 1979.

\bibitem[GSLJ94]{SmoothArc}
G{\'e}rard Gonzalez-Sprinberg and Monique Lejeune-Jalabert.
\newblock Courbes lisses sur les singularit\'es de surface.
\newblock {\em C. R. Acad. Sci. Paris S\'er. I Math.}, 318(7):653--656, 1994.

\bibitem[Har77]{hartshorne}
Robin Hartshorne.
\newblock {\em Algebraic geometry}.
\newblock Springer-Verlag, New York-Heidelberg, 1977.
\newblock Graduate Texts in Mathematics, No. 52.

\bibitem[KM98]{Kollar-Mori}
J{\'a}nos Koll{\'a}r and Shigefumi Mori.
\newblock {\em Birational geometry of algebraic varieties}, volume 134 of {\em
  Cambridge Tracts in Mathematics}.
\newblock Cambridge University Press, Cambridge, 1998.
\newblock With the collaboration of C. H. Clemens and A. Corti, Translated from
  the 1998 Japanese original.

\bibitem[KM99]{Keel-McKernan}
Se{\'a}n Keel and James McKernan.
\newblock Rational curves on quasi-projective surfaces.
\newblock {\em Mem. Amer. Math. Soc.}, 140(669):viii+153, 1999.

\bibitem[Kol96]{RationalCurve}
J{\'a}nos Koll{\'a}r.
\newblock {\em Rational curves on algebraic varieties}, volume~32 of {\em
  Ergebnisse der Mathematik und ihrer Grenzgebiete. 3. Folge. A Series of
  Modern Surveys in Mathematics [Results in Mathematics and Related Areas. 3rd
  Series. A Series of Modern Surveys in Mathematics]}.
\newblock Springer-Verlag, Berlin, 1996.

\bibitem[Kol13]{KK13}
J{\'a}nos Koll{\'a}r.
\newblock {\em Singularities of the minimal model program}, volume 200 of {\em
  Cambridge Tracts in Mathematics}.
\newblock Cambridge University Press, Cambridge, 2013.
\newblock With a collaboration of S{\'a}ndor Kov{\'a}cs.

\bibitem[KT09]{numdP}
Hideo Kojima and Takeshi Takahashi.
\newblock Notes on minimal compactifications of the affine plane.
\newblock {\em Ann. Mat. Pura Appl. (4)}, 188(1):153--169, 2009.

\bibitem[MZ88]{Zhang}
M.~Miyanishi and D.-Q. Zhang.
\newblock Gorenstein log del {P}ezzo surfaces of rank one.
\newblock {\em J. Algebra}, 118(1):63--84, 1988.

\bibitem[Sak85]{sakaiduke}
Fumio Sakai.
\newblock The structure of normal surfaces.
\newblock {\em Duke Math. J.}, 52(3):627--648, 1985.

\bibitem[Sak87]{sakaiams}
Fumio Sakai.
\newblock Classification of normal surfaces.
\newblock In {\em Algebraic geometry, {B}owdoin, 1985 ({B}runswick, {M}aine,
  1985)}, volume~46 of {\em Proc. Sympos. Pure Math.}, pages 451--465. Amer.
  Math. Soc., Providence, RI, 1987.

\bibitem[Zha88]{Zhang-klt}
De-Qi Zhang.
\newblock Logarithmic del {P}ezzo surfaces of rank one with contractible
  boundaries.
\newblock {\em Osaka J. Math.}, 25(2):461--497, 1988.

\end{thebibliography}
\bibliographystyle{alpha}

\end{document}